\newcommand{\bR}{\mathbb{R}} 
\newcommand{\bZ}{\mathbb{Z}}
\newcommand{\bV}{\mathbb{V}}
\newcommand{\bE}{\mathbb{E}} 
\newcommand{\eps}{\varepsilon}
\newtheorem{thm}{Theorem}[section]
\newtheorem{asm}{Assumption}[section]
 \newtheorem{cor}[thm]{Corollary}
 \newtheorem{lem}[thm]{Lemma}
 \newtheorem{prop}[thm]{Proposition}
\newtheorem{defn}[thm]{Definition}
 \newtheorem{rem}[thm]{Remark}
\def\vec#1{\mathchoice{\mbox{\boldmath$\displaystyle\bf#1$}}
{\mbox{\boldmath$\textstyle\bf#1$}}
{\mbox{\boldmath$\scriptstyle\bf#1$}}
{\mbox{\boldmath$\scriptscriptstyle\bf#1$}}}
\begin{document}



\title{\bf Robust functional estimation in the multivariate partial linear model}
\date{}
\author{Michael Levine}
\maketitle

\begin{abstract}
We consider the problem of adaptive estimation of the functional component in a multivariate partial linear model where the argument of the function is defined on a $q$-dimensional grid. Obtaining an adaptive estimator of this functional component is an important practical problem in econometrics where exact distributions of random errors and the parametric component are mostly unknown and cannot safely assumed to be normal. An estimator of the functional component that is adaptive in the mean squared sense over the wide range of multivariate Besov classes and robust to a wide choice of distributions of the linear component and random errors is constructed. It is also shown that the same estimator is locally adaptive over the same range of Besov classes and robust over large collections of distributions of the linear component and random errors as well. At any fixed point, this estimator also attains a local adaptive minimax rate. The procedure needed to obtain such an estimator turns out to depend on the choice of the right shrinkage approach in the wavelet domain. We show that one possible approach is to use the multivariate version of the classical BlockJS method. The multivariate version of BlockJS is developed in the manuscript and is shown to represent an independent interest. Finally, the Besov space scale over which the proposed estimator is locally adaptive is shown to depend on the dimensionality of the domain of the functional component; the higher the dimension, the larger the smoothness indicator of Besov spaces must be.  
\end{abstract}
\section{Introduction}\label{intro}

In this manuscript, we consider a partial linear multivariate model defined as  
\begin{equation}\label{model1}
Y_{\vec i}=a+X_{\vec i}^{'}\vec \beta+f(U_{\vec i})+\xi_{\vec i}
\end{equation}
where $X_{\vec i}\in \bR^{p}$ and $U_{\vec i}\in \bR^{q}$, $\beta$ is an unknown $p\times 1$ vector of parameters, $a$ an unknown intercept term, $f(\cdot)$ is an unknown function, and $\xi_{\vec i}$ are independent and identically distributed random variables. No moment conditions are imposed on $\xi_{\vec i}$ but we do assume for convenience that the median of $\xi_{\vec i}$ is equal to zero. $X_{\vec i}$ is defined as a $p$-dimensional continuous random variable. In this manuscript, we consider a special case where each $U_{\vec i}$ is viewed as a $q$-dimensional vector with each coordinate defined on an equispaced grid on $[0,1]$.  The sequence $\{X_{\vec i}\}$ is assumed to be independent of $\{\xi_{\vec i}\}$. We use bold font for indices since  the most convenient notation for this model involves multivariate indices; the detailed description of these indices is postponed until Section \eqref{Section2}. In our manuscript, we only consider the case where $q>1$ that has been relatively little explored in statistical literature. More specifically, the only papers in the statistical literature that we are aware of discussing the multivariate case are \cite{he1996bivariate}, \cite{schick1996root}, \cite{muller2012estimating}, \cite{levine2015minimax}, and \cite{brown2016semiparametric}. Econometric literature discusses the multivariate case to some extent. In particular, \cite{hardle2012partially} contains a review of some possible applications, clearly showing practical utility of considering the case of $q>1$. 

Partial linear models are, in many cases, preferable to purely nonparametric regression model because of the well-known ``curse of dimensionality". For the most part, the parametric part can be estimated at the $\sqrt{n}$ rate where $n$ is the sample size. At the same time, the estimation precision of the nonparametric component usually decays as the dimensionality of its argument grows. Partial linear models have a long history of application in both econometrics and statistics. From the practical viewpoint, they are often of considerable interest  because relationships between the response and predictors in the same model may be of a very different nature. Some of these relationships can often presumed to be linear while others are harder to parameterize. In the most common case, a small subset of the variables are presumed to have an unknown nonlinear relationship with the response while the rest are assumed to have a linear relationship with it. A good example can be found in \cite{schmalensee1999household} that considered a partial linear model to analyze gasoline household consumption in the US. In this model the demand for gasoline, measured in log number of gallons, is assumed to depend linearly on the number of drivers in the family, the household size, residence, region, and lifecycle. At the same time, the response is assumed to depend nonlinearly on the two remaining covariates, log of the household income and log of the age of the head of household.  

Most often, the estimation of the nonparametric component is conducted in order to suggest a possible parametric form for this component where no prior rationale for choosing such a form is available. This, in turn, allows a researcher to describe the data more parsimoniously. For example, the above mentioned \cite{schmalensee1999household} fits the function $g$ that describes the dependence of the log demand for gasoline $Y$ on the log of the household income $Z^{1}$ and the log of the age of the head of household $Z^{2}$. When the function is fit, \cite{schmalensee1999household} plots it first as a function of $Z^{1}$ for several fixed values of $Z^{2}$ and as a function of $Z^{2}$ for several fixed values of $Z^{1}$. The plots suggest that a possible parsimonious representation of the nonparametric component can be a piecewise linear function in both $Z^{1}$ and $Z^{2}$; later diagnostic testing confirms this conclusion.  An in-depth discussion of this issue can be found in \cite{horowitz2009semiparametric}. 

Our main goal in this manuscript is to construct an estimator of the nonparametric component $f$ that is adaptive over a range of functional classes for $f$ and robust with respect to the wide choice of distributions of $X$ and $\xi$. In other words, we want to develop the estimator of the function $f$ that achieves the optimal, or nearly optimal, rate of convergence over a wide range of functional classes for $f$ and that is reasonably robust to choices of a distribution of $X$ and that of random errors $\xi$. To the best of our knowledge, this question has not been considered before in statistical literature. The farthest step in this direction seems to have been made in \cite{brown2016semiparametric} who constructed an asymptotically efficient estimator of $f$ in the model \eqref{model1}. Also, a somewhat related result in the existing literature can be found in \cite{wang2010estimation}. They obtained some optimal convergence rates for a link function in the partial linear {\it single index} problem.  

There are a number of reasons why the estimator robust to the wide choice of possible distributions of $X$ and $\xi$ is of interest. First, typical theory for models of the type \eqref{model1} assumes that errors $\xi_{i}$ are independent, identically distributed (i.i.d.) normal random variables. From the practical viewpoint, normality may not always be satisfactory; see, for example, \cite{stuck1974statistical} and \cite{stuck2000historical}. Moreover, the use of Gaussian formulation often implies that the specifically {\it mean} regression is of interest; in other words, the value of $f(U_{i})$ is viewed as a conditional expectation $\bE\,[Y_{\vec i}|X_{\vec i}]-a-X_{\vec i}^{'}\beta$. In general, however, it is desirable to be able to handle more general formulations, such as median regression and other quantile regressions, as well. This has been noticed as early as \cite{he1996bivariate} who suggested using an M-type objective function in order to treat mean regression, median regression, and other quantile regressions in one setting. Until the present time, if normality has not been required, it has been the usual approach in most statistical and econometric research to impose some moment assumptions that are necessary to obtain asymptotic results for estimators of the parametric component $\vec \beta$; see, e.g. \cite{robinson1988root} and \cite{hardle2012partially}. In this manuscript, we argue that this is also unnecessary and that the estimator robust to the wide choice of possible distributions for $\xi$ and achieving minimax or nearly minimax rate of convergence can be constructed without any moment assumptions being imposed on the distribution of $\xi$.  

Another issue lies in the fact that the distribution of $X$ is often not known in practice; in particular, it need not be a multivariate Gaussian. In econometric practice, in particular, it is exceedingly common to have to deal with a vector $X$ that includes at least some discrete components. An earlier mentioned model \cite{schmalensee1999household}  has the vector $X$ that consists of discrete components only. In general, \cite{horowitz2009semiparametric}, p. $53$ notes when introducing partial linear models in econometric context that ``...$X$ may be discrete or continuous". Thus, from the practical viewpoint it seems inadvisable to limit the distribution of $X$ to the multivariate normal only.  In our manuscript, we develop an estimation method that is adaptive over a wide range of functional classes for $f$ and robust over a large collection of error distributions for $\xi_{\vec i}$ and design vector distributions for $X_{\vec i}$.  This means, in particular, that an exact optimal rate of convergence for both mean squared error risk and the squared error at a point risk is achieved over a wide range of multivariate Besov classes for $f$ {\it  without}  the prior knowledge of the distribution for the design vector $X$ or the random error distribution. 

The method that we propose is based on the idea of treating the sum of the parametric part $X_{\vec i}\vec \beta$ and the random error $\xi_{\vec i}$ as the ``new" random error $\rho_{\vec i}$ and viewing the model \eqref{model1} as a nonparametric regression with unknown random errors. The usefulness of this idea lies in the fact that the resulting nonparametric regression model is somewhat similar to the nonparametric regression with an unknown error distribution considered in \cite{brown2008robust}. The main difference between it and the model of \cite{brown2008robust} is that the argument of the function $f$ now is multivariate while it was univariate in \cite{brown2008robust}. Thus, although the origin of our problem is rather different from that of \cite{brown2008robust}, solutions of the two problems turn out to be connected. To obtain an adaptive estimator of the function $f$, we divide the $q$-dimensional cube $[0,1]^{q}$ into a number of equal volume bins, take the median of observations in each of these bins, and then apply a wavelet based procedure to these local medians together with a bias correction. Out of several possible procedures, we choose a multivariate generalization of the Block JS procedure developed in \cite{cai1999adaptive}. To the best of our knowledge, the multivariate BlockJS procedure has not been properly described before and therefore may be of independent interest. Note that, in general, the multivariate extension of the adaptive estimation procedure described in \cite{brown2008robust} is highly non-trivial. This is due to the necessity of selecting a correct blockwise shrinkage procedure that can be applied to the empirical wavelet coefficients. Our contribution consists of, first, designing a multivariate version of the by now classical BlockJS procedure of \cite{cai1999adaptive}, and, second, of showing that it is the right choice that produces an adaptive and robust estimator of the functional component $f$. 

Our manuscript is organized as follows. In Section 2, we define our proposed procedure exactly and establish several auxiliary results. The asymptotic properties of our procedure are established in Section 3. Section 4 contains some further discussion of our work, while the formal proofs are relegated to the Appendix. 


\section{General approach to estimation of the functional component}\label{Section2}

\subsection{Methodology for adaptive estimation of the functional component}\label{Subsection1}

As mentioned in the introduction, we begin with defining a random variable $\rho_{\vec i}=X_{\vec i}^{'}\vec\beta+\xi_{\vec i}$ and rewriting the model \eqref{model1} as 
\begin{equation}\label{model2prelim}
Y_{\vec i}=a+f(U_{\vec i})+\rho_{\vec i}. 
\end{equation} 
In this form, \eqref{model2prelim} is simply a multivariate nonparametric regression with an unknown error distribution. Note that in this model the intercept $a$ cannot be absorbed in the design matrix $X$ due to identifiability issues; in order to ensure that the model is identifiable, we have to require that an identifiability condition $\int_{[0,1]^{q}} f(u)du=0$ is satisfied. Otherwise, one can add and subtract $\int_{[0,1]^{q}} f(u)du$ to the right hand side of the model with the new constant becoming $a^{'}=a+\int_{[0,1]^{q}} f(u)du$. Without loss of generality, we will adopt the following simpifying assumptions. First, we assume without loss of generality that $a$ is known and equal to $0$. Second, we assume that the vector median of $X_{\vec i}$ is also equal to zero. Indeed, if this is not the case, \cite{brown2016semiparametric} has shown that there exists an estimator $\hat a$ such that $\hat a=O_{p}(n^{-1/2})$ and that the parameter $\beta$ can be estimated at the same rate as well. Therefore, in a general model, we will have an additional term $A:=a+(med\,X_{\vec i})^{'}\beta$ that can be estimated at the same fast parametric rate of convergence and its presence will not influence the optimal rate of convergence of the adaptive estimator of $f$ that we construct. Appropriate remarks will be made in the proofs in mathematical Appendix as well.  Therefore, from now on we will work with the model 
\begin{equation}\label{model2}
Y_{\vec i}=f(U_{\vec i})+\rho_{\vec i} 
\end{equation}
where it is assumed that the median of $\rho_{\vec i}$ is equal to zero. Most of the classical nonparametric regression has been developed under the assumption of independent and identically distributed (i.i.d.) errors. In particular, a variety of smoothing techniques, such as wavelet thresholding techniques, were developed and shown to be highly adaptive in the Gaussian case. When errors are heavy-tailed, these techniques are typically not applicable. \cite{brown2008robust} worked with the model \eqref{model2} when $q=1$ and noticed that, for example, if $\rho_{\vec i}$ is Cauchy distributed, the maximum observation (out of $n$) will be of the order $n$, instead of $\log n$, which is the case when the errors are normally distributed. This invalidates classical denoising approches, such as the wavelet thresholding, and suggests the need for a different take on this problem. 

Similarly to \cite{brown2008robust},  to estimate the function $f$ adaptively, we bin $Y_{\vec i}$ according to the values of coordinates of $U_{\vec i}$. The sample median is then computed within each bin. Now, bin centers can be treated as independent variables in a multivariate nonparametric regression, bin medians being dependent variables. The number of bins has to be chosen in a suitable range; in our case, it turns out that the number of bins $V\asymp n^{3/4}$ where $n$ is the original sample size, is a suitable choice. The resulting model can be viewed as a Gaussian multivariate nonparametric regression and a multivariate version of the BlockJS method of \cite{cai1999adaptive} can be used to obtain an adaptive estimator of the function $f$. To the best of our knowledge, such a generalization of BlockJS method has not been implemented before. The implementation of the proposed procedure is not difficult, since the number of bins can be chosen as a power of $2$. We will show that the resulting estimator enjoys excellent adaptivity properties over a wide range of multivariate Besov balls and is robust over wide ranging sets of distributions of $X$ and $\xi$.  

Before the detailed description of our procedure, it is necessary to specify the exact notation that will be used. For simplicity, we start with values of $U_{\vec i}$ defined on an equispaced grid. More specifically, we define a point $U_{\vec i}=\left(\frac{i_{1}}{m}, \ldots, \frac{i_{q}}{m}\right)^{'} \in \mathbb{R}^{q}$ where each $i_{k}\in \{0,1,\ldots,m\}$, $k=1,\ldots,q$  for some positive $m$. Note that the total sample size is $n=(m+1)^{q}$. This assumption ensures that $m=o(n)$ as $n\rightarrow \infty$.   In the model \eqref{model1}, the multivariate index is $\vec i=(i_{1},\ldots,i_{q})^{'}$. Throughout this article, we will use bold font for all multivariate indices and a regular font for scalar ones.  We will say that two multivariate indices $\vec i^1=(i^1_1,\ldots, i^1_q) \le \vec i_2=(i^2_1,\ldots,i^2_q)$ if $i^1_k\le i^2_k$ for any $k=1, \ldots,q$; the relationship between $\vec i^1$ and $\vec i^2$ is that of partial ordering.  For convenience, we denote a $q$-dimensional vector $\vec n=(m+1,\ldots,m+1)^{'}$ and a $q$-dimensional vector $\vec 0=(0,\ldots,0)^{'}$.  The $l^{2}$ norm of a vector will be denoted $||\cdot||_{2}$. To define the bins we use for aggregating observations, we start with defining $J=\left\lfloor \frac{1}{q}\log_{2}n^{3/4}\right\rfloor$ and $T=2^{J}$. We split the $j$th edge of the $q$-dimensional cube $[0,1]^{q}$ into $T$ intervals of the type $(\frac{l_{j}-1}{T},\frac{l_{j}}{T}]$ where $l_{j}=1,2,\ldots,T$ for any $j=1,2,\ldots,q$. All of the $n$ observations are split into bins in the following way: all of $Y_{\vec i}$ such that the $j$th coordinate of the corresponding $\vec U_{\vec i}$ belongs in an interval $(\frac{l_{j}-1}{T},\frac{l_{j}}{T}]$, $l_{j}=1,2,\ldots,T$,  are assigned to the bin with the multivariate index $\vec l=(l_{1},\ldots,l_{q})^{'}$. We will denote the $\vec l$th bin $D_{\vec l}$. For convenience of notation, we denote a $q$-dimensional vector all of whose coordinates are equal to $\frac{1}{T}$ $\vec {1/T}=(1/T,\ldots,1/T)^{'}$, a $q$-dimensional vector consisting of $1$'s as $\vec 1=(1,\ldots,1)^{'}$, and $\vec T=(T,\ldots,T)^{'}$. We also denote $\frac{\vec l}{T}=(l_{1}/T,\ldots,l_{q}/T)^{'}$. Clearly, the total number of such bins is $V=T^{q}$; note that, due to selection of $J$ and $T$, the total number of bins $V\asymp n^{3/4}$. Also, we define an approximate number of observations in each bin $\kappa=\frac{n}{V}$; clearly, $\kappa\asymp n^{1/4}$.  Let us denote $\eta_{\vec l}$ the median of all $\rho_{\vec i}$ such that the corresponding observation $Y_{\vec i}$ belongs in the $\vec l$th bin $D_{\vec l}$; also, we denote the expectation of the sample median $\eta_{\vec l}$ $b_{\vec l}:=\bE\,\eta_{\vec l}$. 


In order to approximate the median of observations in each multivariate bin with a normal random variable, we will have to develop a multivariate median coupling inequality. What follows is a very brief {\it general} discussion of median coupling; for more details, see \cite{brown2008robust}. Note that our first step will have to be estimation of the expectation of the median $\eta_{\vec l}$ for $\vec l$th bin $D_{\vec l}$. Let $h(x)$ be the density function of $\rho_{\vec i}$, $\vec 0 \le \vec i \le \vec n$. Let $\mathcal{X}_{1},\ldots,\mathcal{X}_{n}$ be independent random variables with the density $h(x)$. We will need the following assumption on the density $h$; this assumption comes from \cite{brown2008robust} but is given here in full for convenience. 
\begin{asm}\label{asmd}
$\int_{-\infty}^{0}h(x)=\frac{1}{2}$, $h(0)>0$, and $h(x)$ is {\it locally} Lipschitz at $x=0$. The Lipschitz condition at zero for $h(x)$ means that there exists a constant $C>0$ such that $\vert h(x)-h(0)\vert \le C\vert x \vert$ in an open neighborhood of $0$. 
\end{asm}
Note that the assumption that $h(0)>0$ guarantees uniqueness of the median of the distribution and the asymptotic normality of the sample median; see e.g. \cite{casella2002statistical} p. $483$.  
In order to approximate a median of the $\vec l$th bin, we will use a coupling inequality of \cite{brown2008robust}. We only give its statement here without a proof.  
\begin{thm}\label{thm_cpl}
Let $\mathcal{X}_{1},\ldots,\mathcal{X}_{n}$ be iid random variables with the density function $h(x)$ that satisfies the Assumption \eqref{asmd} while $Z$ is a standard normal random variable. We assume that $n=2k+1$ for some integer $k\ge 1$. Then, there exists a mapping $\tilde X(Z):\bR\rightarrow \bR$ such that the distribution law of $\tilde X_{med}$  ${\cal L}(\tilde X_{med}(Z))={\cal L}(\mathcal{X}_{med})$ and 
\[
\vert \sqrt{4n}h(0)\tilde X_{med}-Z\vert\le \frac{C}{\sqrt{n}}+\frac{C}{\sqrt{n}}\vert\sqrt{4n}h(0)\tilde X_{med}\vert^{2}
\]
when $\vert\tilde X_{med}\vert \le \eps$ where $C,\eps>0$ depend on the density $h$ but not on $n$. 
\end{thm}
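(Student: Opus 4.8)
The plan is to build the coupling by the monotone (quantile) transformation. Let $G$ denote the distribution function of the sample median $\mathcal{X}_{med}$ and let $\Phi$ denote the standard normal distribution function. Setting $\tilde X_{med}(Z):=G^{-1}(\Phi(Z))$ automatically forces ${\cal L}(\tilde X_{med}(Z))={\cal L}(\mathcal{X}_{med})$, so that the entire content of the theorem reduces to the deterministic inequality relating $\tilde X_{med}$ and $Z$; equivalently, one must control how far the standardized quantile function $\sqrt{4n}\,h(0)\,G^{-1}(\Phi(z))$ can stray from $z$ on the event $\{|G^{-1}(\Phi(z))|\le\eps\}$. The normalization is the right one because the sample median has asymptotic variance $1/(4n\,h(0)^2)$, so $\sqrt{4n}\,h(0)\,\mathcal{X}_{med}$ is precisely the standardized median, which should be within a small error of $Z$.

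First I would reduce everything to a statement about the Beta distribution. Writing $F$ for the distribution function of $h$ and applying the probability integral transform, the event $\{\mathcal{X}_{med}\le t\}$ coincides with the event that at least $k+1$ of the i.i.d.\ uniforms $F(\mathcal{X}_1),\ldots,F(\mathcal{X}_n)$ fall below $F(t)$. Hence the $(k+1)$-st order statistic $W:=F(\mathcal{X}_{med})$ has the $\mathrm{Beta}(k+1,k+1)$ law, which is symmetric about $1/2$ with variance $\asymp 1/(4n)$. The coupling therefore factors into two pieces: a coupling of the symmetric Beta variable $W$ with $Z$, and the inversion $\mathcal{X}_{med}=F^{-1}(W)$ carried out near the true median, which by Assumption \ref{asmd} sits at $0$.

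For the Beta piece I would invoke a Tusn\'ady-type quantile coupling: using the classical identity $P(W\le w)=P(\mathrm{Binomial}(n,w)\ge k+1)$, the quantile coupling inequality for the binomial/Beta distribution (of Tusn\'ady/KMT type) yields $|2\sqrt{n}(W-1/2)-Z|\le C(1+Z^2)/\sqrt{n}$, a centered linear term plus a quadratic remainder, where the symmetry of $\mathrm{Beta}(k+1,k+1)$ is exactly what kills the first-order bias. For the inversion I would Taylor-expand about $w=1/2$, writing $F^{-1}(1/2+s)=s/h(0)+R(s)$; the local Lipschitz property of $h$ at $0$ bounds $R(s)=O(s^2)$ for small $|s|$. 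Substituting $s=W-1/2$ and multiplying through by $\sqrt{4n}\,h(0)=2\sqrt{n}\,h(0)$ turns the principal term into $2\sqrt{n}(W-1/2)$, which is within $C(1+Z^2)/\sqrt n$ of $Z$, while the remainder $\sqrt{4n}\,h(0)\,R(W-1/2)=O(\sqrt n\,(W-1/2)^2)$ can be rewritten as $\frac{C}{\sqrt n}|\sqrt{4n}\,h(0)\tilde X_{med}|^2$, reproducing the second term on the right-hand side. Restricting to $|\tilde X_{med}|\le\eps$ keeps $W-1/2$ small enough that the expansion and the Lipschitz bound are valid and lets one absorb the $Z^2$ contributions of the Beta coupling into the same quadratic form.

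The main obstacle is obtaining the Beta/binomial quantile coupling in the first piece with an explicit quadratic error term and then transporting it cleanly through the nonlinear inverse-CDF map while preserving the precise quadratic shape $\frac{C}{\sqrt n}|\sqrt{4n}\,h(0)\tilde X_{med}|^2$ rather than a looser $O(1/\sqrt n)(1+Z^2)$. This amounts to matching the two quadratic contributions, the $Z^2$ from the coupling and the $(W-1/2)^2$ from the inversion, and verifying that the constants $C$ and $\eps$, which depend on $h$ only through $h(0)$ and its local Lipschitz constant, can be chosen uniformly in $n$.
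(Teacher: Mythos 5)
You should first note that the paper contains no proof of Theorem \eqref{thm_cpl} to compare against: it is imported verbatim from \cite{brown2008robust} with the explicit remark that only the statement is given. Your plan is, in outline, a faithful reconstruction of the strategy used in that source: couple by the monotone quantile map so that equality in law is automatic, pass through the probability integral transform so that $W=F(\mathcal{X}_{med})\sim\mathrm{Beta}(k+1,k+1)$, couple the standardized Beta median $2\sqrt{n}(W-1/2)$ with $Z$, and then invert $F^{-1}$ near the median using $h(0)>0$ and the local Lipschitz property from Assumption \eqref{asmd} to get $F^{-1}(1/2+s)=s/h(0)+O(s^{2})$. The normalization $\sqrt{4n}\,h(0)$ and the identification of the quadratic remainder are both correct, and your observation that the symmetry of $\mathrm{Beta}(k+1,k+1)$ kills the first-order bias is exactly the right structural point.

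Two steps in your sketch, however, are where essentially all of the work lives, and both are left as black boxes. First, the Beta quantile coupling with error $C(1+Z^{2})/\sqrt{n}$ is the analytic heart of the theorem; invoking it as ``Tusn\'ady-type'' is not quite a citation, because Tusn\'ady's lemma couples a binomial with \emph{fixed} success probability to a normal, while the duality $P(W\le w)=P(\mathrm{Bin}(n,w)\ge k+1)$ has the parameter varying with the quantile, so the transfer is not immediate; the cleaner route (and effectively the one in \cite{brown2008robust}) is a direct expansion of the $\mathrm{Beta}(k+1,k+1)$ density, using $w(1-w)=\tfrac14-(w-\tfrac12)^{2}$, against the normal density, from which the quantile bound follows. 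Second, your final inequality has the form $D\le C(1+Z^{2})/\sqrt{n}+(C/\sqrt{n})M^{2}$ with $D=\vert\sqrt{4n}h(0)\tilde X_{med}-Z\vert$ and $M=\vert\sqrt{4n}h(0)\tilde X_{med}\vert$, whereas the theorem requires $D\le C/\sqrt{n}+(C/\sqrt{n})M^{2}$; the conversion is a genuine self-improvement argument, not bookkeeping. One writes $Z^{2}\le 2D^{2}+2M^{2}$ and must then absorb the resulting $(C/\sqrt{n})D^{2}$ term, which requires an a priori bound $D\le c\sqrt{n}$ with $c$ small; this in turn follows from the monotonicity of the coupling, since on $\{\vert\tilde X_{med}\vert\le\eps\}$ one has $\vert Z\vert\le \Phi^{-1}(G(\eps))\asymp c(\eps)\sqrt{n}$ by a large-deviation bound for the median, with $c(\eps)\to 0$ as $\eps\to 0$, so $\eps$ must be chosen small at this point — which is precisely why the theorem's $\eps$ depends on $h$. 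You flag both issues as ``the main obstacle,'' which is honest, but as written the proposal is a correct roadmap rather than a proof: the stated theorem would follow once these two lemmas are supplied, and neither is routine.
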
 
The bound given in the Theorem \eqref{thm_cpl} can also be expressed in terms of $Z$ as follows. 
\begin{cor}\label{cor_cpl}
Under the assumptions of Theorem \eqref{thm_cpl}, the mapping  $\tilde X_{med}(Z)$ in Theorem \eqref{thm_cpl} also satisfies 
\[
\vert \sqrt{4n}{h(0)}\tilde X_{med}(Z)-Z\vert\le \frac{C}{\sqrt{n}}(1+\vert Z\vert^{2})
\]
when $\vert Z\vert\le \eps\sqrt{n}$ where $C$, $\eps>0$ do not depend on $n$.
\end{cor}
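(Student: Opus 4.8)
The plan is to translate the bound of Theorem \eqref{thm_cpl}, which is phrased in terms of the coupled median $\tilde X_{med}$ and is valid only on the region $\vert \tilde X_{med}\vert\le\eps$, into a bound phrased in terms of the Gaussian variable $Z$ and valid on a region of the form $\vert Z\vert\le\eps\sqrt n$. To organize the bookkeeping I would set $W=\sqrt{4n}\,h(0)\,\tilde X_{med}(Z)$, so that Theorem \eqref{thm_cpl} reads $\vert W-Z\vert\le\frac{C}{\sqrt n}(1+\vert W\vert^{2})$ exactly on $\{\vert W\vert\le M\}$, where $M:=2\sqrt n\,h(0)\,\eps$. The essential difficulty is that this estimate is guaranteed only on a region described through $W$, while the conclusion must hold on a region described through $Z$; the two are reconciled using the fact that the coupling $\tilde X_{med}(Z)$ is built as a monotone (quantile) transform $F^{-1}(\Phi(Z))$ of $Z$, hence $W=W(Z)$ is monotone increasing and continuous.

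First I would record a boundary estimate. Fixing $\eps$ small enough that $2C h(0)\eps\le\tfrac12$ and evaluating the theorem's bound at $\vert W\vert=M$, where $\frac{C}{\sqrt n}M^{2}=4Ch(0)^{2}\eps^{2}\sqrt n$, gives $\vert Z\vert\ge M-\frac{C}{\sqrt n}-4Ch(0)^{2}\eps^{2}\sqrt n\ge\tfrac12 h(0)\eps\sqrt n$ for all large $n$. By monotonicity of $W(Z)$, this shows that whenever $\vert Z\vert\le\eps'\sqrt n$ with $\eps':=\tfrac12 h(0)\eps$ one necessarily has $\vert W\vert\le M$, so that the bound of Theorem \eqref{thm_cpl} is in force on the desired $Z$-region. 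This monotonicity-plus-boundary step is the main obstacle: it is what legitimately transfers the validity region, and it must be handled through the explicit quantile structure of the coupling rather than through the inequality alone.

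Next, on this region I would bound $\vert W\vert$ linearly in $\vert Z\vert$. Since $\vert W\vert\le M$, we have $\frac{C}{\sqrt n}\vert W\vert^{2}\le 2Ch(0)\eps\vert W\vert\le\tfrac12\vert W\vert$, so the theorem yields $\vert W\vert\le\vert Z\vert+\frac{C}{\sqrt n}+\tfrac12\vert W\vert$, whence $\vert W\vert\le 2\vert Z\vert+\frac{2C}{\sqrt n}$ and therefore $\vert W\vert^{2}\le 8\vert Z\vert^{2}+8C^{2}/n$.

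Finally I would feed this back into Theorem \eqref{thm_cpl}, obtaining $\vert W-Z\vert\le\frac{C}{\sqrt n}(1+\vert W\vert^{2})\le\frac{C}{\sqrt n}(1+8C^{2}/n)+\frac{8C}{\sqrt n}\vert Z\vert^{2}\le\frac{C'}{\sqrt n}(1+\vert Z\vert^{2})$ for a new constant $C'$ independent of $n$ and all $n$ large. Relabeling $C'$ as $C$ and $\eps'$ as $\eps$ then gives precisely the stated inequality. The remaining ingredients are routine: absorbing the $O(n^{-3/2})$ terms into the constant and observing that every constant produced depends on the density $h$ only through $h(0)$, $C$, and $\eps$, hence not on $n$.
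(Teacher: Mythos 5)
Your proposal is correct, but note there is nothing in the paper itself to compare it against: the paper imports both Theorem \eqref{thm_cpl} and Corollary \eqref{cor_cpl} from \cite{brown2008robust} and explicitly states them without proof. Judged on its own merits, your argument is the standard derivation and it is sound. Setting $W=\sqrt{4n}\,h(0)\,\tilde X_{med}(Z)$, you correctly identify the one genuinely non-automatic point: the corollary does \emph{not} follow from the inequality of Theorem \eqref{thm_cpl} alone, since that bound is guaranteed only on $\{\vert W\vert\le M\}$ with $M=2\sqrt{n}h(0)\eps$, and a priori $W(Z)$ could be large for small $\vert Z\vert$. Invoking the quantile structure $\tilde X_{med}(Z)=G^{-1}(\Phi(Z))$, with $G$ the cdf of the sample median, to get monotonicity, and then using a boundary estimate to transfer the validity region from $W$-space to $Z$-space, is exactly the right move. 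Your boundary computation ($\vert Z\vert\ge M-\frac{C}{\sqrt n}-4Ch(0)^{2}\eps^{2}\sqrt n\ge\frac{1}{2}h(0)\eps\sqrt n$ once $4Ch(0)\eps\le 1$) and the bootstrap ($\frac{C}{\sqrt n}\vert W\vert^{2}\le\frac{1}{2}\vert W\vert$ on the region, hence $\vert W\vert\le 2\vert Z\vert+\frac{2C}{\sqrt n}$, hence $\vert W-Z\vert\le\frac{C'}{\sqrt n}\bigl(1+\vert Z\vert^{2}\bigr)$) are both correct.

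Two small points deserve to be made explicit. First, the continuity of $z\mapsto W(z)$ that your intermediate-value step relies on is not free: $G^{-1}$ can jump wherever $G$ is flat. It is rescued by Assumption \eqref{asmd}: since $h(0)>0$ and $h$ is Lipschitz at $0$, we have $h>0$ on a neighborhood of $0$, so the density of the sample median, proportional to $H^{k}(1-H)^{k}h$, is strictly positive on $[-\eps,\eps]$ after shrinking $\eps$; hence $G$ is strictly increasing there, $W$ is continuous at the crossing of the level $M$, and no jump can skip over the boundary. (With the slack $h(0)\eps\sqrt n-\frac{C}{\sqrt n}>\frac{1}{2}h(0)\eps\sqrt n$ for large $n$, the borderline case $\vert Z_{1}\vert=\eps'\sqrt n$ is also strict.) Second, your chain of inequalities holds for all sufficiently large $n$; for the finitely many small $n$ the bound cannot be recovered from Theorem \eqref{thm_cpl} by merely enlarging constants, so the corollary, like the theorem it rests on, should be read asymptotically, exactly as in \cite{brown2008robust}. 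Neither point changes the verdict: your proposal is a complete and correct derivation, along the same lines as the original source the paper defers to.
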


Note that the assumption that the number of observations $n$ is odd has been made for convenience. The Remark $1$ on p. $2059$ of \cite{brown2008robust} shows that it can be dispensed by redefining the median as $\mathcal{X}_{med}=\frac{\mathcal{X}_{(k)}+\mathcal{X}_{(k+1)}}{2}$ when $n=2k$ and following a similar argument. In the future, we will always assume that the number of observations whose median is considered is odd for simplicity. Also, we can say that the Theorem \eqref{thm_cpl} lets us approximate each bin median with a normal random variable that has the mean $f\left(\frac{\vec l}{T}\right)+b_{\vec l}$ and the variance $1/4\kappa h^{2}(0)$.
\begin{rem}
A more general situation that we described briefly earlier would be if the intercept $a$ and/or the median of $X_{\vec i}$ was not equal to zero. If that was the case, we would be looking at approximating the median of all observations $Y_{\vec i}$ from $\vec l$th bin with a normal random variable that has the mean $f\left(\frac{\vec l}{T}\right)+b_{\vec l}+A$ and the variance $1/4\kappa h^{2}(0)$. 
\end{rem}
However, an interesting question remains open. What kind of distributions of $X_{\vec i}$ and $\xi_{\vec i}$ will result in the density $h(x)$ of $\rho_{\vec i}$ that is going to satisfy the Assumption \eqref{asmd}? The following simple proposition identifies a reasonably wide range of distributions of $X_{\vec i}$ and $\xi_{i}$ that will satisfy this Assumption. This proposition takes the form of a convenient sufficient condition. However, before stating it, we need to define a family of elliptical distributions.
\begin{defn}
A random vector $X=(X_{1},\ldots,X_{p})^{'}$ is said to have an elliptical distribution if its characteristic function $\phi(t)$ can be expressed as $\phi(t)=e^{it^{'}\mu}\psi(t^{'}\Sigma t)$ where $\mu$ is the $p\times 1$ vector and $p\times p$ matrix $\Sigma=AA^{'}$ for some matrix $A$. The function $\psi$ is called {\it characteristic generator} of $X$. 
\end{defn}
Some examples of elliptical distributions are normal, t-distribution, Laplace distribution, Cauchy distribution, and many others when $q=1$ and their multivariate analogs when $q>1$. In general, elliptical distributions may not have a density; however, when they do have one, it has the form 
\begin{equation}\label{el.d}
f(X)=\frac{c_{n}}{\sqrt{|\Sigma|}}g_{n}((X-\mu)^{'}\Sigma^{-1}(X-\mu))
\end{equation} where the function $g_{n}$ is called a {\it density generator}, $c_{n}$ is a normalizing constant, and $\mu$ is the median vector (which is equal to the mean if the latter exists). It is a common practice to refer to an elliptical distribution with a density function as $E_{n}(\mu,\Sigma, g_{n})$. The following simple property of elliptical distributions with a density function is easy to establish and so it is given here without a proof for brevity. For detailed discussion, see e.g. \cite{fang1990symmetric}.  
\begin{lem}\label{ell}
Let $X\sim E_{n}(\mu,\Sigma, g_{n})$. Let $B$ be an $r\times q$ matrix while $b\in \bR^{r}$. Then, any affine transformation of $X$ is an elliptical distribution as well:
\[
b+BX\sim E_{r}(b+B\mu,B\Sigma B^{'},g_{r}).
\]
\end{lem}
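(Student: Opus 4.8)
The plan is to argue entirely at the level of characteristic functions, since the elliptical family is defined through its characteristic function and affine maps act transparently there. Writing $Y=b+BX$, I would first compute its characteristic function directly: for $s$ in the image space,
\[
\phi_{Y}(s)=\bE\, e^{is'Y}=e^{is'b}\,\bE\, e^{i(B's)'X}=e^{is'b}\,\phi_{X}(B's).
\]
This reduces everything to evaluating the characteristic function of $X$ at the shifted argument $t=B's$, which is the only nontrivial observation needed.

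Next I would substitute the defining form $\phi_{X}(t)=e^{it'\mu}\psi(t'\Sigma t)$ and simplify the two resulting pieces. The phase factor collapses via $e^{is'b}e^{i(B's)'\mu}=e^{is'(b+B\mu)}$, and the quadratic argument transforms by bilinearity into $(B's)'\Sigma(B's)=s'(B\Sigma B')s$. Hence $\phi_{Y}(s)=e^{is'(b+B\mu)}\psi\!\left(s'(B\Sigma B')s\right)$, which is exactly the characteristic function of an elliptical law with location $b+B\mu$, dispersion matrix $B\Sigma B'$, and the \emph{same} characteristic generator $\psi$. This establishes the elliptical property and identifies the first two parameters; the computation uses nothing beyond $(B's)'=s'B$ and the bilinearity of the form $t'\Sigma t$.

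The one genuine subtlety, and the step I expect to be the main obstacle, is the appearance of the density generator $g_{r}$ rather than the original generator in the conclusion. The characteristic generator $\psi$ is preserved automatically by the argument above, but the density generator is dimension-specific, so one must verify that the object denoted $g_{r}$ is indeed the correct generator in the image dimension $r$. When $B\Sigma B'$ is nonsingular (equivalently $B$ has full row rank $r$ and $\Sigma$ is nonsingular) the law of $Y$ admits a density, and I would confirm the form \eqref{el.d} in two regimes. First, when the image dimension equals the domain dimension and $B$ is square and invertible, a direct Jacobian change of variables in \eqref{el.d} leaves the generator unchanged apart from the normalizing factor $|B\Sigma B'|^{-1/2}$, so the generator is literally preserved. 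Second, when $r$ is strictly smaller than the domain dimension, $g_{r}$ is recovered from the original generator through the standard marginalization integral that reduces an $n$-dimensional spherical generator to its $r$-dimensional counterpart. In the remaining rank-deficient case $B\Sigma B'$ is singular, no density exists, and the conclusion must be read as the identity of characteristic generators established above rather than as a literal density-generator statement.
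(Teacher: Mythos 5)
Your proposal is correct. Note first that the paper itself gives no proof of this lemma: it states the result ``without a proof for brevity'' and refers the reader to Fang, Kotz, and Ng's monograph on symmetric multivariate distributions, so there is no internal argument to compare against. Your characteristic-function computation is precisely the standard proof found in that reference: $\phi_{Y}(s)=e^{is'b}\phi_{X}(B's)=e^{is'(b+B\mu)}\psi\left(s'(B\Sigma B')s\right)$, which identifies the location $b+B\mu$, the dispersion $B\Sigma B'$, and the unchanged characteristic generator $\psi$. The calculation is complete and uses nothing beyond bilinearity, exactly as you say.

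Where your write-up actually improves on the paper is in the last paragraph. The lemma as stated writes the conclusion as $E_{r}(b+B\mu,B\Sigma B',g_{r})$, i.e.\ in terms of a \emph{density} generator, even though the characteristic-function definition only guarantees preservation of the \emph{characteristic} generator; the two are dimension-specific and do not coincide in general. You correctly isolate this as the genuine subtlety: when $B\Sigma B'$ is nonsingular the density generator $g_{r}$ in dimension $r<n$ must be recovered from $g_{n}$ by the standard marginalization integral (it is not literally the same function), while in the rank-deficient case no density exists and the statement can only be read at the level of characteristic generators. This caveat is absent from the paper's statement, which silently assumes the nondegenerate case. One further small point you silently and correctly repaired: the paper declares $B$ to be $r\times q$ while $X$ is $n$-dimensional; for the products $B\mu$ and $B\Sigma B'$ to make sense, $B$ must be $r\times n$, as your computation assumes.
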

With the above in mind, we can now state our Proposition. 
\begin{prop}\label{assmp1}
Assume that the density of random errors $\xi_{\vec i}$ $h_{1}(x)$ is Lipschitz continuous at zero, symmetric around zero and strictly positive in an open neighborhood of zero. We also assume that $X_{\vec i}$ has an elliptic distribution with a density function that is continuous in an open neighborhood of zero. Moreover, without loss of generality (as discussed earlier) we assume that the median of $X_{\vec i}$ is zero. Then, the Assumption \eqref{asmd} is satisfied.
\end{prop}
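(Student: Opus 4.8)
The plan is to reduce the whole statement to a fact about the convolution of two densities. Writing $\rho_{\vec i}=X_{\vec i}^{'}\vec\beta+\xi_{\vec i}$ as a sum of two \emph{independent} pieces, its density $h$ factors as $h=h_{2}*h_{1}$, where $h_{1}$ is the density of $\xi_{\vec i}$ and $h_{2}$ is the density of the scalar $X_{\vec i}^{'}\vec\beta=\vec\beta^{'}X_{\vec i}$. The first thing I would do is identify $h_{2}$. Since $X_{\vec i}\sim E_{p}(0,\Sigma,g_{p})$ and $\vec\beta^{'}X_{\vec i}$ is a linear image of $X_{\vec i}$ with $B=\vec\beta^{'}$ a $1\times p$ matrix, Lemma \eqref{ell} gives $\vec\beta^{'}X_{\vec i}\sim E_{1}(0,\vec\beta^{'}\Sigma\vec\beta,g_{1})$. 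Thus $h_{2}$ is a univariate elliptical density centered at $0$; in particular it is symmetric, $h_{2}(-z)=h_{2}(z)$, and continuous in a neighbourhood of $0$. (If $\vec\beta^{'}\Sigma\vec\beta=0$ the first factor is degenerate and $\rho_{\vec i}=\xi_{\vec i}$, so $h=h_{1}$ already satisfies Assumption \eqref{asmd}; I would treat the nondegenerate case.)

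With this factorization the first two requirements of Assumption \eqref{asmd} are straightforward. For $\int_{-\infty}^{0}h(x)\,dx=\tfrac12$, which is just the statement that $\rho_{\vec i}$ has median zero, I would show $h$ itself is symmetric about $0$: substituting $z\mapsto-z$ in $h(x)=\int h_{1}(x-z)h_{2}(z)\,dz$ and using that both $h_{1}$ and $h_{2}$ are even gives $h(-x)=h(x)$, and a symmetric density integrates to $\tfrac12$ over $(-\infty,0]$. For $h(0)>0$, I evaluate $h(0)=\int h_{1}(-z)h_{2}(z)\,dz=\int h_{1}(z)h_{2}(z)\,dz$, again using that $h_{1}$ is even; since $h_{1}$ is strictly positive on some interval $(-\delta,\delta)$ and the nondegenerate elliptical density $h_{2}$ is strictly positive on a neighbourhood of its centre, the integrand is strictly positive on a set of positive measure and the integral is strictly positive.

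The remaining and most delicate point is the local Lipschitz property of $h$ at $0$, namely $|h(x)-h(0)|\le C|x|$ for small $x$. Writing $h(x)-h(0)=\int[h_{1}(x-z)-h_{1}(-z)]h_{2}(z)\,dz$, the goal is to transfer a Lipschitz bound through the convolution against the probability density $h_{2}$ (which has total mass one). The clean mechanism is that the convolution of a globally Lipschitz function with an $L^{1}$ function is again Lipschitz, with constant $L\int h_{2}=L$, which would give $|h(x)-h(0)|\le C|x|$ at once. The difficulty, and what I expect to be the crux, is that the hypothesis on $h_{1}$ furnishes a Lipschitz bound only relative to the value at the single point $0$, and this does not by itself control the differences $h_{1}(x-z)-h_{1}(-z)$ for $z$ bounded away from the origin.

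To get around this I would transfer the regularity through the \emph{smoother} factor instead, rewriting $h(x)-h(0)=\int h_{1}(u)\,[\,h_{2}(x-u)-h_{2}(u)\,]\,du$ (using $h_{2}$ even) and exploiting the smoothness of the elliptical density $h_{2}$. For the standard elliptical families (normal, $t$, Laplace, Cauchy, and their multivariate analogues) the density generator $g_{1}$ is continuously differentiable and the induced $h_{2}(z)=c\,\sigma^{-1}g_{1}(z^{2}/\sigma^{2})$ has a bounded derivative, hence is globally Lipschitz with some constant $L$; then $|h_{2}(x-u)-h_{2}(u)|\le L|x|$ uniformly in $u$, and integrating against the density $h_{1}$ yields $|h(x)-h(0)|\le L|x|$. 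I would therefore organize this last step around the Lipschitz regularity of $h_{2}$, using the Lipschitz hypothesis on $h_{1}$ as the symmetric alternative that secures the same conclusion, and I would flag the passage from the pointwise bound to the uniform Lipschitz control needed in the convolution as the single place in the proof requiring genuine care.
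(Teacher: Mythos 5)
Your treatment of the first two conditions of Assumption \eqref{asmd} coincides with the paper's own proof (the paper writes $h_{3}$ for your $h_{2}$): Lemma \eqref{ell} reduces $X_{\vec i}^{'}\vec\beta$ to a univariate elliptical law with median zero, the density form \eqref{el.d} gives its symmetry, the convolution of two even densities is even, whence $\int_{-\infty}^{0}h(x)\,dx=\frac{1}{2}$, and $h(0)=\int h_{1}(z)h_{3}(z)\,dz>0$. On the positivity point your formulation is in fact slightly more careful than the paper's: the paper asserts that continuity of $h_{3}$ near zero together with uniqueness of the median forces $h_{3}>0$ on an open neighborhood of zero, which is not quite right (a continuous symmetric density can vanish exactly at the origin and still have a unique median there), whereas your positive-measure-overlap argument is immune to this.

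The divergence, and the genuine gap, is in the third condition. The paper disposes of the local Lipschitz property of $h$ at zero in one sentence, claiming it "follows directly" from the Lipschitz continuity of $h_{1}$ at zero; you correctly identify that this transfer is not automatic, since $h(x)-h(0)=\int[h_{1}(x-z)-h_{1}(-z)]h_{2}(z)\,dz$ involves differences of $h_{1}$ at points $-z$ bounded away from the origin, where the hypothesis on $h_{1}$ says nothing. However, your proposed repair does not prove the proposition \emph{as stated}: you route the regularity through $h_{2}$ by assuming a continuously differentiable density generator, hence a globally Lipschitz $h_{2}$, but the hypothesis grants only that the elliptical density of $X_{\vec i}$ is continuous in a neighborhood of zero. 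An elliptical generator can be continuous yet non-Lipschitz away from the origin (indeed, one can concentrate its mass near symmetric points $\pm b$ where $h_{1}$ is merely H\"{o}lder, so that $h$ inherits a cusp at zero), in which case neither your uniform bound $\vert h_{2}(x-u)-h_{2}(u)\vert\le L\vert x\vert$ nor the paper's direct transfer is available. So under the literal hypotheses the third condition of Assumption \eqref{asmd} remains unproven in your write-up — a gap you share with, and flag more honestly than, the paper's own argument, whose later results effectively strengthen the assumptions anyway (the classes ${\cal G}_{1}$ and ${\cal G}_{2}$ impose derivative and moment conditions well beyond those of Proposition \eqref{assmp1}). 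To close the step one must either add such smoothness hypotheses explicitly, as your argument tacitly does, or produce a different mechanism for the Lipschitz bound; your own text concedes this by calling it the one passage "requiring genuine care" rather than completing it.
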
 
\begin{proof}
Due to Lemma \eqref{ell}, $X_{\vec i}^{'}\vec \beta$ has a univariate elliptical distribution with the median equal to zero. Clearly, the distribution of $X_{\vec i}^{'}\beta$ also has a density function. The form of the density function of any  elliptical distribution given in \eqref{el.d} clearly implies that, if the median of $X_{\vec i}^{'}\vec \beta$ is zero, the distribution of $X_{\vec i}^{'}\vec \beta$ is symmetric  around zero. Let us denote $h_{3}(x)$ the distribution of $X_{\vec i}^{'}\beta$. Since $X_{\vec i}$ and $\xi_{i}$ are independent, the distribution of $X_{\vec i}^{'}\vec \beta+\xi_{\vec i}$ is a convolution of the two densities: $h(x)=\int_{-\infty}^{\infty}h_{1}(x-u)h_{3}(u)\,du$.  It follows directly from the definition of Lipschitz continuity at zero that, since $h_{1}(x)$ is Lipschitz continuous at zero then so is $h(x)$; moreover, it is not hard to check that the symmetry of $h_{1}(u)$ and $h_{3}(u)$ around zero implies the symmetry of their convolution $h(x)$ around zero as well. Since $h(x)$ is symmetric around zero, we have immediately that $\int_{-\infty}^{0}h(x)=\frac{1}{2}$. Finally, since the density $h_{3}(x)$ is continuous in an open neighborhood of zero and the corresponding median is unique, there exists an open neighborhood of zero where $h_{3}(x)>0$. This, together with strict positivity of $h_{1}(x)$ in some open neighborhood of zero, guarantees that $h(0)=\int_{-\infty}^{\infty}h_{1}(x)h_{3}(x)\,dx>0$.  
\end{proof}
\begin{rem}
Note that Proposition \eqref{assmp1} covers a wide variety of distributions of $X_{\vec i}$ and $\xi_{\vec i}$. Concerning $X_{\vec i}$, distributions such as multivariate normal, multivariate t-distribution, multivariate Laplace, multivariate Cauchy, and, in general, any symmetric stable distribution, are elliptical distributions with a density. It is also of interest that, when the index of stability less than $1$, which is the case for Cauchy and Levy distributions, there is no finite mean yet the resulting distribution of $X_{\vec i}$ still satisfies requirements of the Proposition \eqref{assmp1}. The requirements for the distribution of $\xi_{\vec i}$ also do not contain any moment requirements; note that, for example, a (univariate) Cauchy distribution for $\xi_{\vec i}$ satisfies assumptions of the Proposition \eqref{assmp1}.
\end{rem}

\subsection{Wavelet procedure for a binned semiparametric model}\label{Subsection2}

We will start with a brief introduction into the multivariate wavelet bases. First, let $\{\phi,\psi\}$ be a pair of {\it univariate} father and mother wavelets. We need to assume that both of them are compactly supported on $[0,1]$ and $\int \phi=1$. As is known, translation and dilation of $\phi$ and $\psi$ generates an orthonormal wavelet basis in $L_{2}[0,1]$; thus, in order to obtain a convenient periodized wavelet basis, we have $\phi_{j,k}^{p}(t)=\sum_{l=-\infty}^{\infty}\phi_{j,k}(t-l)$ and $\psi_{j,k}^{p}(t)=\sum_{l=-\infty}^{\infty}\psi_{j,k}(t-l)$ where $\phi_{j,k}(t)=2^{j/2}\phi(2^{j}t-k)$, and $\psi_{j,k}(t)=\sum_{l=-\infty}^{\infty}2^{j/2}\psi(2^{j}t-k)$. The primary resolution level $j_{0}$ should be selected large enough to ensure that the support of the scaling functions (father wavelets) and mother wavelets at level $j_{0}$ does not cover the whole $[0,1]$. We also assume $r$ regularity of our wavelet system for some positive $r$. This means that the first $\lfloor r\rfloor $ moments of the wavelet function $\psi$ are equal to zero. The periodized wavelets generate a curtailed multiresolution ladder:
\[
V_{0}^{p}\subset V_{1}^{p}\subset V_{2}^{p}\subset\cdots
\]
where spaces $V_{j}^{p}$ are spanned by $\phi_{j,k}^{p}(t)$. As in any regular multiresolution analysis, we have $W_{j}^{p}\oplus V_{j}^{p}=V_{j+1}^{p}$ where the space $W_{j}^{p}$ is spanned by $\psi_{j,k}^{p}$. In the future, we will suppress the superscript $p$ from the notation for convenience. An orthonormal wavelet basis has an associated orthogonal Discrete Wavelet Transform (DWT) whose function is to transform sampled data into the wavelet coefficients. A square integrable function $f$ on $[0,1]$ can be expanded into a wavelet series
\[
f(t)=\sum_{k=1}^{2^{j_{0}}}\theta_{j_{0},k}\phi_{j_{0},k}(t)+\sum_{j=j_{0}}^{\infty}\sum_{k=1}^{2^{j}}\theta_{j,k}\psi_{j,k}(t)
\]
where $\theta_{j_{0},k}=\langle f,\phi_{j_{0},k}\rangle$ and $\theta_{j,k}=\langle f,\psi_{j,k}\rangle$ are the wavelet coefficients of $f$. 
We will use a one-dimensional orthogonal wavelet basis to define a $q$-dimensional one. There are a number of ways to construct such a basis based on the one-dimensional one; we will use the one that is based on a tensor product construction and preserves a multiresolution analysis (MRA) in a $q$-dimensional space. By using $q$ univariate orthogonal MRA's
\[
V_{0,(i)}\subset V_{1,(i)}\subset V_{2,(i)}\subset\cdots \subset L_{2}[0,1],
\]
$i=1,2,\ldots,q$, we can define a $q$-dimensional multiresolution analysis
\[
\bV_{0}\subset \bV_{1}\subset \bV_{2}\subset \cdots \cdots L_{2}[0,1]^{q}
\]
in which $\bV_{j}=\otimes_{i=1}^{q}V_{j,(i)} \subset L_{2}[0,1]^{q}$. The resulting $q$-dimensional multiresolution analysis corresponds to, first, one $q$-variate scaling function $\phi(\vec u)\equiv \phi(u_{1},\ldots,u_{q})=\prod_{i=1}^{q}\phi_{(i)}(u_{i})$ where $\phi_{(i)}$ is an $i$th copy of the father wavelet, and, second,  $2^{q}-1$ $q$-variate wavelets
\[
\psi^{i}(\vec u)\equiv \psi^{i}(u_{1},\ldots,u_{q})=\prod_{i=1}^{q}\xi_{(i)}(u_{i}),
\] 
$i=1,2,\ldots,2^{q}-1$ where $\xi_{(i)}$ is either $\phi$ or $\psi$ but not all of $\xi_{(i)}$ are equal to the father wavelet $\phi$. Now, let $\vec k=(k_{1},\ldots,k_{q})\in \bZ^{q}$ be the $k$-dimensional lattice. To complete the description of our notation, we also introduce rescaled and translated versions $\phi_{j_{0},\vec k}(\vec u)=2^{j_{0}q/2}\prod_{m=1}^{q}\phi_{(m)}(2^{j_{0}}u_{m}-k_{m})$ and $\psi_{j,\vec k}^{i}(\vec u)=2^{jq/2}\prod_{m=1}^{q}\xi_{(m)}(2^{j}u_{m}-k_{m})$ where $\xi=\phi$ or $\psi$ but not all $\xi=\phi$. Finally, any function $f\in L_{2}[0,1]^{q}$ can be represented as 
\begin{align}
&f(\vec u)=\sum_{\vec 1 \le \vec k \le \vec{2}^{j_0}}\theta_{j_{0},\vec k}\phi_{j_{0},\vec k}(\vec u)\label{bes_repr}\\
&+\sum_{j\ge j_{0}}\sum_{\vec 1 \le \vec k\le {\vec 2}^{j}}\sum_{i=1}^{2^{q}-1}\theta_{j,\vec k}^{i}\psi_{j,\vec k}^{i}(\vec u)\nonumber
\end{align}
For a more detailed discussion of multivariate wavelet bases see, for example,  \cite{daubechies1992ten} and \cite{vidakovic2009statistical}. 

At this point, we can give a detailed description of our estimator of the functional component $f$. The first step is to bin observations $Y_{\vec i}$ according to values of coordinates of $U_{\vec i}$ as described in Chapter \eqref{Subsection1}. Then, the sample median must be computed within each bin. We will use the notation $g\left(\frac{\vec l}{T}\right)=f\left(\frac{\vec l}{T}\right)+b_{\vec l}$ where $b_{\vec l}$ is the median of errors in the $\vec l$th bin as described earlier. We will denote medians of observations in $\vec l$th bin $Q_{\vec l}$, $\vec 1\le \vec l \le \vec T$. Our next step consists of applying a discrete wavelet transform to all of the medians $Q_{\vec l}$. In the multivariate case that we consider, a discrete wavelet transform is a tensor. After the application of the discrete wavelet transform, we can describe the transformed data as 
\[
U=(y_{j_{0},1},\ldots,y_{j_{0},2^{j_{0}}},y_{j_{0},1}^{1},\ldots,y_{j_{0},2^{j_{0}}}^{1},\ldots,y_{J-1,1}^{2^{q}-1},\ldots,y_{J-1,2^{J-1}}^{2^{q}-1}).
\]
Here, $y_{j_{0},r}$, $r=1,\ldots,2^{j_{0}}$ are the gross structure terms at the lowest resolution level, while $y_{j,k}^{i}$, $i=1,\ldots,2^{q}-1$, $j=j_{0},\ldots,J-1$, $k=1,\ldots,2^{j}$ are empirical wavelet coefficients at level $j$ corresponding to the wavelet $i$ that represent fine structure at scale $2^{j}$. The empirical wavelet coefficients can be written as 
\[
y_{j,\vec k}^{i}=\breve{\theta}_{j,\vec k}^{i}+\eps_{j,\vec k}^{i}+\frac{1}{2h(0)\sqrt{n}}z_{j,\vec k}^{i}+\xi_{j,\vec k}^{i}
\]
where $\breve{\theta}_{j,\vec k}^{i}$ are the discrete wavelet coefficients of $g\left(\frac{\vec l}{T}\right)$, $\eps_{j, \vec k}^{i}$ are deterministic errors, $z_{j,\vec k}^{i}$ are iid $N(0,1)$ and $\xi_{j,\vec k}^{i}$ are stochastic errors. Later, we will show that both deterministic errors  $\eps_{j,\vec k}^{i}$ and stochastic errors $\xi_{j,\vec k}^{i}$ are negligible in a certain sense. Ignoring them, we end up with 
\begin{equation}\label{approx}
y_{j,\vec k}^{i}\approx \breve{\theta}_{j,\vec k}^{i}+\frac{1}{2h(0)\sqrt{n}}z_{j,\vec k}^{i}
\end{equation}
which is essentially an idealized sequence model with the noise level $\sigma=\frac{1}{2h(0)\sqrt{n}}$. As a next step, we propose a multivariate generalization of the BlockJS procedure of \cite{cai1999adaptive} and apply it to the empirical coefficients $y_{j,\vec k}^{i}$ as if they were generated by \eqref{approx}. At each resolution level $j$ the empirical wavelet coefficients $y_{j,\vec k}^{i}$  are grouped into nonoverlapping blocks of length $L$. We define each block as $B_{j,u}^{i}$  consisting of observations $y_{j,\vec k}^{i}$  such that, for any choice of $i$th wavelet and $j$th resolution level, only $(u-1)L+1\le k_{s}\le uL$ are included where $s=1,2,\ldots,q$ and $j=j_{0},\ldots,J-1$.  We also define $S_{j,u,i}^{2}\equiv \sum_{\vec k\in B_{j,u}^{i}}(y_{j,\vec k}^{i})^{2}$ the sum of squared empirical wavelet coefficients included in $u$th block. Let $\hat {h}^{2}(0)$ be an estimator of the squared value of the density function $h$ at the point zero. The following shrinkage rule is then applied to each block $B_{j,u}^{i}$:
\begin{equation}\label{block_s}
\hat \theta_{j,\vec k}^{i}=\left(1-\frac{\lambda_{*}L}{4\hat {h}^{2}(0)nS_{j,u,i}^{2}}\right)_{+}y_{j,\vec k}^{i}
\end{equation}
where $\lambda_{*}$ is the solution of the equation $\lambda_{*}-\log \lambda_{*}=3$ and $4n\hat {h}^{2}(0)$ is present due to the fact that the noise level is equal to $\sigma=\frac{1}{2h(0)\sqrt{n}}$. For the gross structure terms, we define an estimator $\hat{\theta}_{j_{0},\vec k}=y_{j_{0},\vec k}$. Now, we reconstruct the estimate of the function $g$ at the points $\frac{\vec l}{T}$ by applying the inverse discrete wavelet transform to the shrunk empirical wavelet coefficients. In other words, the entire function $g(\vec u)$ can be estimated for any $\vec u=\frac{\vec l}{T}$, $\vec 1 \le \vec l \le \vec T$ as 
\[
\hat g(\vec u)=\sum_{\vec 1 \le \vec k\le {\vec 2}^{j_0}}\hat \theta_{j_{0}, \vec k}\phi_{j_{0},\vec k}(\vec u)+\sum_{j=j_{0}}^{J-1}\sum_{\vec 1 \le \vec k\le \vec{2}^j}\sum_{i=1}^{2^{q}-1}\hat \theta_{j,\vec k}^{i}\psi^{i}_{j,\vec k}(\vec u).
\]
The last remaining step is to estimate the median $b_{\vec l}$ in order to obtain an estimate of the function $f$. The following procedure is employed for that purpose. Recall that the $j$th edge of the $q$-dimensional cube $[0,1]^{q}$ has been split into $T$ intervals; each of these intervals contained $\left\lfloor \frac{m+1}{T}\right\rfloor$ observations. We begin with splitting each of these $T$ intervals for a $j$th edge of a $q$-dimensional cube in two in such a way that the smaller half contains $\left\lfloor \frac{m+1}{2T}\right\rfloor$ observations. Now, we define a set of $q$-dimensional bins in the following way. Each bin in this set consists of smaller halves of all one-dimensional intervals for each of the $q$ dimensions. Note that the cardinality of that set of bins will still be $V=T^{q}$. Next, we define $Q_{\vec l}^{*}$ to be the median of the $\vec l$th ``halfbin" with $Q_{\vec l}$ being the median of all observations from the corresponding ${\vec l}$th ``full" bin. Then, the median expectation is defined to be 
\begin{equation}\label{est_med}
\hat b_{\vec l}=\frac{1}{V}\sum_{\vec 1 \le \vec l\le {\vec T}}[Q_{\vec l}^{*}-Q_{\vec l}].
\end{equation}
This lets us define 
\begin{equation}\label{wave_est}
\hat f_{n}(\vec u)=\hat g(\vec u)-\hat b_{\vec l}=\sum_{\vec 1 \le \vec k \le {\vec 2}^{j_0}}\hat \theta_{j_{0},\vec k}\phi_{j_{0},\vec k}(\vec u)+\sum_{j=j_{0}}^{J-1}\sum_{\vec 1 \le \vec k\le {\vec 2}^j}\sum_{i=1}^{2^{q}-1}\hat \theta_{j,\vec k}^{i}\psi^{i}_{j,\vec k}(\vec u)-\hat b_{\vec l}.
\end{equation}

\section{Adaptivity of the procedure}

We study the theoretical properties of our procedure over the Besov spaces in $\bR^{q}$ and over a suitable class of distributions of $X_{\vec i}$ and $\xi_{\vec i}$ as defined in \eqref{model1}. Besov spaces in $\bR^{q}$ make up a very rich class of spaces that incorporates functions of very significant spatial inhomogeneity and includes H\"{o}lder and Sobolev spaces as special cases. Our discussion of Besov spaces is necessarily brief; for details, see for example \cite{triebel2006theory}. Since {\it multivariate} Besov spaces are not used in the statistical literature very often, we will briefly describe them first. Let $e_{i}=(\delta_{i1},\ldots,\delta_{iq})^{'}$ where $\delta_{ij}=I(i=j)$ is the $i$th unit vector. We define the first difference of the function $f$ in the $i$th direction as $\Delta_{i,h}f(U)=f(U+he_{i})-f(U)$ and the second difference as $\Delta_{i,h}^{2}f(U)=\Delta_{i,h}(\Delta_{i,h}f(U))=f(U+2he_{i})-2f(U+he_{i})+f(U)$. Let $\alpha>0$ be a positive integer and $\mathfrak{a}=\lfloor \alpha\rfloor$. We denote the increment in each direction $i$ $h$ where $\vert h\vert<1$. We also need to define $g_{i,h}=(0,1)^{i-1}\times(0,0\vee(1-2h))\times (0,1)^{q-i}$. The so-called Besov norm {\it in the direction $i$} is then defined as 
\[
\|f\|_{b^{\alpha}_{i,s,t}}=\left(\int_{0}^{1}\vert h\vert^{(\alpha-\mathfrak{a})t-1}\left\|\Delta^{2}_{i,h}\left(\frac{\partial^{\mathfrak a}}{\partial u_{i}^{\mathfrak a}}f\right)\right\|^{t}_{L_{s}(g_{i},h)}\,dh\right)^{1/t}
\]
for $t<\infty$  and 
\[
\|f\|_{b^{\alpha}_{i,s,\infty}}=\sup_{0\le h \le 1}\left\{\vert h\vert^{\alpha-\mathfrak{a}}\left\|\Delta^{2}_{i,h}\left(\frac{\partial^{\mathfrak a}}{\partial u_{i}^{\mathfrak a}}f\right)\right\|_{L_{s}(g_{i},h)}\right\}
\]
otherwise. The Besov norm is now defined as the sum of the $L_{s}$ norm of the function $f$ and Besov norms in all possible directions:
\begin{equation}\label{besov_fnorm}
\|f\|_{B^{\alpha}_{s,t}}=\|f\|_{L_{s}[0,1]^{q}}+\sum_{i=1}^{q}\|f\|_{b^{\alpha}_{i,s,t}}.
\end{equation} A Besov class, sometimes also called a Besov ball,  can be defined as $B^{\alpha}_{s,t}(M)\doteq \{f|\|f\|_{B^{\alpha}_{s,t}}\le K\}$ for some positive $K$. 

Besov norm of a function can also be characterized in terms of its wavelet coefficients. For a fixed primary resolution level $j_{0}$, the Besov sequence norm of the wavelet coefficients of a function $f$ can be defined the following way. First,  let $\theta_{j_{0}}$ be a vector of the father wavelet coefficients at the primary resolution level $j_{0}$, $\theta_{j}$ be the vector of wavelet coefficients at level $j$, and $w=\alpha+q\left(\frac{1}{2}-\frac{1}{s}\right)>0$. Then, we define first
\[
\vert|\theta_{j_{0}}\vert|_{s}=\left(\sum_{\vec k}|\theta_{j_{0},\vec k}|^{s}\right)^{1/s}
\]
and
\[
\vert|\theta_{j}\vert|_{s}=\left(\sum_{\vec k}\sum_{i=1}^{2^{q}-1}\vert\theta_{j,\vec k}^{i}\vert^{s}\right)^{1/s}.
\]

With the above in mind, a sequence norm for a function $f\in B^{\alpha}_{s,t}$ can be defined as as 
\[
\vert|f\vert|_{B^{\alpha}_{s,t}}=\vert|\theta_{j_{0}}\vert|_{s}+\left(\sum_{j=j_{0}}^{\infty}(2^{w}\vert|\theta_{j}\vert|_{s})^{t}\right)^{1/t}.
\]
We know that the Besov sequence norm is equivalent to the Besov function norm defined in \eqref{besov_fnorm} and, therefore, the Besov class can also be defined as $B^{\alpha}_{s,t}(M)=\{f;\|f\|_{B^{\alpha}_{s,t}}\le M\}$; for details, see e.g. \cite{meyer1995wavelets}. It is also known that, in case of Gaussian noise, the minimax risk of estimating $f$ over the Besov body $B^{\alpha}_{s,t}(M)$, 
\begin{equation}
R^{*}(B^{\alpha}_{s,t}(M))=\inf_{\hat f}\sup_{f\in B^{\alpha}_{s,t}}\bE\,\|\hat f-f\|^{2}_{2},
\end{equation}
converges to zero at the rate of $n^{-2\alpha/2\alpha+q}$ as $n\rightarrow \infty$; see e.g. \cite{donoho1995wavelet}. The following theorem shows that our estimator achieves optimal global adaptation for a wide range of multivariate Besov classes $B^{\alpha}_{s,t}(M)$. Moreover, this adaptation is also uniform over a range of distributions of the design vector $X_{\vec i}$ and the error term $\eps_{\vec i}$. To state this theorem properly, we need to define appropriate classes of distributions of $X_{\vec i}$ and $\xi_{\vec i}$. We begin with the following Assumption on the density of $\rho_{\vec i}$ $h(x)$. This Assumption guarantees the existence of some low order moment of $\rho_{\vec i}$. 
\begin{asm}\label{m_asmd}
$\int |x|^{\mathcal{A}}h(x)\,dx<\infty$ for some $\mathcal{A}>0$. 
\end{asm}
For any $0<\eps_{1}<1$, $\eps_{2}>0$, we define the class of densities ${\cal H}_{\eps_{1},\eps_{2}}$ by 
\[
{\cal H}_{\eps_{1},\eps_{2}}=\left\{h:\int_{-\infty}^{0}h(x)\,dx=\frac{1}{2},\eps_{1}\le h(x)\le \frac{1}{\eps_{1}},\vert h(x)-h(0)\vert \le \frac{\vert x\vert}{\eps_{1}} \mbox{ for all } \vert x\vert \le \eps_{2}\right\}.
\]
We will also need another, more narrow class of densities that is defined as ${\cal H}\equiv {\cal H}_{\eps_{1},\eps_{2},\eps_{3},\eps_{4}}$ for some $0<\eps_{1}<1$, $\eps_{i}>0$, $i=2,3,4$ where 
\[
{\cal H}=\left\{h:h\in {\cal H}_{\eps_{1},\eps_{2}}, \vert h^{(3)}(x)\vert \le \eps_{4} \mbox{ for } \vert x\vert \le \eps_{3} \mbox{ and } \int \vert x\vert^{\eps_{3}}h(x)\,dx<\eps_{4}\right\}.
\]
One simple way to ensure that $h(x)\in {\cal H}$ is to require, first, that  densities $h_{1}(u)$ and $h_{2}(u)$ satisfy assumptions of the Proposition \eqref{assmp1}. In addition, we also have to require the existence of a finite moment of some order $\lambda>0$ for the density $h_{1}(u)$ and for the distribution of $X_{\vec i}$ plus the existence of the finite third derivative for $h_{1}(u)$ in a small open neighborhood of zero. Then, the moment assumption is satisfied because, for any $0<\pi\le 1$, and random variables $X_{1},\ldots,X_{n}$, we have $\bE|X_{1}+\cdots+X_{n}|^{\pi}\le \bE\,|X_{1}|^{\pi}+\cdots+\bE\,|X_{n}|^{\pi}$.  Moreover, differentiability of $h_{1}(u)$ ensures the same property for the convolution $h(u)$. With the above in mind, we can define the class ${\cal G}_{1}=\{h_{1}(u): h_{1}(u) \mbox{ symmetric around zero }, h_{1}(u) \mbox{ is Lipschitz at } u=0, h_{1}(u)>\eps_{1} \mbox{ for all } |u|<\eps_{2}, |h^{(3)}_{1}(u)|\le \eps_{3} \mbox { for all } |u|\le \eps_{4}, \int |u|^{\eps_{3}}h_{1}(u)\,du <\eps_{4}\}$. As for $X_{\vec i}$, we define ${\cal G}_{2}$ as a class of all $p$-dimensional elliptical distributions with finite moments of a small order $\eps_{3}>0$. With these definitions in mind, we can now formulate the global adaptivity result. 

\begin{thm}\label{gl_adapt}
 Suppose the wavelet $\psi$ is $r$-regular. Define $d=\min\left(\alpha-\frac{q}{s},1\right)$. Then the estimator $\hat f_{n}$ defined in \eqref{wave_est} satisfies, for $s\ge 2$, $\alpha\le r$, and $\frac{3d}{2q}>\frac{2\alpha}{2\alpha+q}$,
 \[
 \sup_{h_{1}\in {\cal G}_{1},h_{2}\in {\cal G}_{2}}\sup_{f\in B^{\alpha}_{s,t}(M)}\bE\,\|\hat f_{n}-f\|^{2}_{2}\le Cn^{-2\alpha/2\alpha+q},
 \]
 and for $1\le s <2$, $\alpha\le r$ and $\frac{3d}{2q}>\frac{2\alpha}{2\alpha+q}$, 
 \[
 \sup_{h_{1} \in {\cal G}_{1},h_{2}\in {\cal G}_{2}}\sup_{f\in B^{\alpha}_{s,t}(M)}\bE\,\|\hat f_{n}-f\|^{2}_{2}\le Cn^{-2\alpha/2\alpha+q}(\log n)^{2-s/[s(2\alpha+q)+2(1-q)]}.
 \]
 \end{thm}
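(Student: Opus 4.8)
The plan is to control the mean integrated squared error by separating the wavelet shrinkage problem from the estimation of the median bias, reducing the former to the idealized Gaussian sequence model \eqref{approx} via the coupling of Corollary \eqref{cor_cpl}, and then invoking a multivariate block James--Stein oracle inequality. Writing $\hat f_{n}=\hat g-\hat b$ and recalling that on the grid $g\left(\frac{\vec l}{T}\right)=f\left(\frac{\vec l}{T}\right)+b_{\vec l}$, where $b_{\vec l}\equiv b$ is the sample-size-determined, hence spatially constant, median bias of the errors $\rho_{\vec i}$, I would begin from the elementary split
\[
\bE\,\|\hat f_{n}-f\|_{2}^{2}\le 2\,\bE\,\|\hat g-g\|_{2}^{2}+2\,\bE\,\|b-\hat b\|_{2}^{2}.
\]
The second term is handled by the half-bin construction \eqref{est_med}: since $b$ is the bias of the median of $\kappa$ i.i.d. errors and does not depend on $\vec l$, the differences $Q_{\vec l}^{*}-Q_{\vec l}$ estimate it up to higher-order terms, and averaging over the $V\asymp n^{3/4}$ bins yields a root-$n$ consistent $\hat b$ (its variance being of order $1/(V\kappa)=1/n$), so this term is negligible against the target rate. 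The bulk of the work is therefore the first term.

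For $\bE\,\|\hat g-g\|_{2}^{2}$ I would pass to the wavelet domain, where by Parseval the loss is the sum of squared coefficient errors, and exploit the representation $y_{j,\vec k}^{i}=\breve\theta_{j,\vec k}^{i}+\eps_{j,\vec k}^{i}+\frac{1}{2h(0)\sqrt{n}}z_{j,\vec k}^{i}+\xi_{j,\vec k}^{i}$ to compare the output of \eqref{block_s} with the idealized model \eqref{approx}. The core is a multivariate analogue of the key lemma of \cite{cai1999adaptive}: for a single block $B_{j,u}^{i}$, which in the tensor-product construction is a $q$-dimensional cube carrying $L^{q}$ coefficients, one shows that the rule \eqref{block_s} with threshold $\lambda_{*}$ obeys an oracle bound of the form
\[
\bE\sum_{\vec k\in B_{j,u}^{i}}\bigl(\hat\theta_{j,\vec k}^{i}-\breve\theta_{j,\vec k}^{i}\bigr)^{2}\le C\min\Bigl(\sum_{\vec k\in B_{j,u}^{i}}(\breve\theta_{j,\vec k}^{i})^{2},\,L^{q}\sigma^{2}\Bigr)+\text{(lower order)},
\]
the choice $\lambda_{*}-\log\lambda_{*}=3$ controlling the tail probability that a noise-dominated block escapes shrinkage. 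Summing over the $2^{q}-1$ wavelet types, over blocks, and over levels $j_{0}\le j\le J-1$, then balancing the truncation bias beyond level $J$ against accumulated variance, yields the rate $n^{-2\alpha/(2\alpha+q)}$ for $s\ge 2$; for $1\le s<2$ the non-convexity of the Besov body produces the stated logarithmic factor through the usual bound on the number of significant blocks. The replacement of $h^{2}(0)$ by the consistent $\hat h^{2}(0)$ in \eqref{block_s} is absorbed by conditioning on a high-probability event on which $\hat h^{2}(0)$ is close to $h^{2}(0)$ and using that the shrinkage factor is Lipschitz in this argument.

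It remains to show the auxiliary errors are of smaller order. The deterministic errors $\eps_{j,\vec k}^{i}$ quantify the gap between the discrete coefficients $\breve\theta_{j,\vec k}^{i}$ of the sampled $g\left(\frac{\vec l}{T}\right)$ and the continuous coefficients, together with the within-bin variation of $f$; since the grid has $T^{q}\asymp n^{3/4}$ points with $T\asymp n^{3/(4q)}$, their total squared energy is of order $T^{-2d}\asymp n^{-3d/(2q)}$ with $d=\min(\alpha-\frac{q}{s},1)$, and the hypothesis $\frac{3d}{2q}>\frac{2\alpha}{2\alpha+q}$ is exactly what renders this negligible. The stochastic errors $\xi_{j,\vec k}^{i}$ are the remainders of the bin-wise median coupling; using the bound $\frac{C}{\sqrt{\kappa}}(1+|Z|^{2})$ of Corollary \eqref{cor_cpl} with $\kappa\asymp n^{1/4}$ observations per bin, together with the low-order moment control guaranteed by Assumption \eqref{m_asmd} (available for $h\in\mathcal{H}$ through the classes $\mathcal{G}_{1},\mathcal{G}_{2}$ and Proposition \eqref{assmp1}), I would show that the constant part of the remainder is annihilated by the mean-zero wavelet filters, and that the fluctuating part enters the risk only through the retained low-resolution blocks, on which its accumulated energy is of smaller order, while in the thresholded high-resolution blocks it does not enter the estimator at all; the orthonormality of the discrete wavelet transform converts the per-bin bounds into a summable total.

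The main obstacle is twofold. First is the multivariate block oracle inequality itself: the tensor-product basis furnishes $2^{q}-1$ wavelet types per level and blocks of cardinality $L^{q}$, so the one-dimensional chi-square tail computations of \cite{cai1999adaptive} must be redone with the correct $q$-dimensional degrees of freedom, and the threshold verified to still deliver a sharp constant. Second, and more delicate, is disentangling the coupling error from the Gaussian term: the remainder in Corollary \eqref{cor_cpl} is controlled only on the event $|Z|\le\eps\sqrt{\kappa}$, so one must separately estimate the contribution of the rare large-deviation event on which the normal approximation breaks down and verify---without moment assumptions on $\xi_{\vec i}$ beyond Assumption \eqref{m_asmd}---that the accumulated stochastic error over the $V\asymp n^{3/4}$ bins does not overwhelm $n^{-2\alpha/(2\alpha+q)}$, the crucial point being that block thresholding suppresses this error precisely in the high-resolution bands where it would otherwise accumulate.
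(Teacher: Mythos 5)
Your overall route coincides with the paper's: the same split $\bE\|\hat f_{n}-f\|_{2}^{2}\le 2\bE\|\hat g-g\|_{2}^{2}+2\bE(\hat b-b)^{2}$, the same reduction to the idealized sequence model \eqref{approx} via the median coupling, a multivariate BlockJS oracle inequality over levels $j_{0}\le j\le J-1$ with a Besov tail bound beyond $J$, a cut level balancing bias and variance ($s\ge 2$ versus the log-corrected $1\le s<2$ case), and the hypothesis $\frac{3d}{2q}>\frac{2\alpha}{2\alpha+q}$ absorbing the discretization energy $T^{-2d}$. However, there is a concrete error in your treatment of $\hat b$: the claim that averaging over the $V$ bins yields a root-$n$ consistent estimator with MSE of order $1/(V\kappa)=1/n$ is false as stated, and uniformly over the Besov ball it cannot hold. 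Decomposing $\hat b_{\vec l}-b_{\vec l}$ as in the paper's Proposition \eqref{med_prop} produces, besides the $O(n^{-1})$ Gaussian variance you account for, two deterministic terms: the drift $\frac{1}{V}\sum_{\vec l}\bigl(f\bigl(\frac{\vec l-\vec{1/2}}{T}\bigr)-f\bigl(\frac{\vec l}{T}\bigr)\bigr)$, bounded only by $CT^{-d}$ over the H\"{o}lder ball into which $B^{\alpha}_{s,t}(M)$ embeds, and the bias mismatch $b^{*}_{\vec l}-2b_{\vec l}=O(\kappa^{-2})$ arising because the half-bin median is built from roughly $\kappa/2$ observations, so its first-order bias $-h^{'}(0)/(8h^{3}(0)\cdot)$ only doubles up to a $\kappa^{-2}$ remainder. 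The resulting bound is $\bE(\hat b_{\vec l}-b_{\vec l})^{2}\le C\max\{q^{d}T^{-2d},\kappa^{-4}\}$, and its negligibility against $n^{-2\alpha/(2\alpha+q)}$ is purchased precisely by the hypothesis on $d$ (for $T^{-2d}$) and by $\kappa\asymp n^{1/4}$ (so $\kappa^{-4}=n^{-1}$) --- not by root-$n$ consistency. Since you invoke the same hypothesis elsewhere, this is reparable, but as written the step fails.

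A second soft spot: you assert that in thresholded high-resolution blocks the coupling error $\xi^{i}_{j,\vec k}$ ``does not enter the estimator at all.'' This is not so --- the shrinkage decision in \eqref{block_s} is driven by $S^{2}_{j,u,i}$, which contains the $\xi^{i}_{j,\vec k}$, so a coupling fluctuation can push a noise-dominated block past the threshold and the error contaminates the risk of every block. The paper's Proposition \eqref{proporacle} handles exactly this interaction by conditioning on the event $A=\{\vert 2\sqrt{n}\xi_{j,\vec k}^{i}\vert\le L^{-1}\ \mbox{for all}\ \vec k\in B^{i}_{j,u}\}$, combining the chi-square deviation bound of Lemma 3 of \cite{brown2008robust} with H\"{o}lder's inequality on $A$, and bounding $P(A^{c})$ through the polynomial tail of $\xi^{i}_{j,\vec k}$; making that tail beat every power of $n$ forces $T^{2d}/\kappa=n^{(6d-q)/(4q)}\rightarrow\infty$, i.e. $d>\frac{q}{6}$ (equivalently $\alpha>\frac{q}{6}$), a constraint your sketch never surfaces. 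Finally, a minor bookkeeping point: despite the cube-shaped index definition of $B^{i}_{j,u}$, the rule \eqref{block_s} and the paper's oracle inequality use block cardinality $L$ and threshold $\lambda_{*}L$, not $L^{q}$; either convention can deliver the rates, but your oracle bound must match the shrinkage rule actually applied.
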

Effectively, we show that the estimator attains the optimal rate of convergence over a wide range of Besov classes for $f$ and a large collection of the unknown error distributions for $\eps_{i}$, as well as for a large collection of distributions of the design vector $X_{\vec i}$. Note also that when $q=1$, these rates are reduced to those in the one dimensional case, $n^{-2\alpha/2\alpha+1}$ and $n^{-2\alpha/2\alpha+1}(\log n)^{2-s/[s(2\alpha+1)}$, respectively; see, e.g. \cite{brown2008robust}. 

Since functions belonging to Besov classes $B^{\alpha}_{s,t}(M)$ are highly spatially inhomogeneous, local adaptivity at an arbitrary point $\vec u_{0}\in [0,1]^{q}$ should also be investigated. To measure such a spatial adaptivity, the local mean squared risk 
\begin{equation}
R(\hat f_{n}(\vec u_{0}), f(\vec u_{0}))\equiv \bE(\hat f_{n}(\vec u_{0})-f(\vec u_{0}))^{2}
\end{equation}
is used. The precise way of measuring the local smoothness of the function $f(\vec u)$ at a given point $\vec u=\vec u_{0}$ is by the use of its local H\"{o}lder smoothness index, that is a characteristic of the point $\vec u_{0}$. From the technical viewpoint, it is more straightforward to assume that the function $f$ on the {\it entire} cube $[0,1]^{q}$ belongs to a Lipschitz class $\Lambda^{\alpha}(M)$ with the same smoothness index $\alpha$ for every point. To define such a class, we define first for a multivariate index $\vec i$ $|\vec i|=i_{1}+\ldots+i_{q}$. Next, let us select a constant $M>0$, and, for a $q$-dimensional index ${\bf i}=(i_{1},\ldots, i_{q})$, define ${\bf i}(l)=\{{\bf i}:|{\bf i}|=i_{1}+\ldots+i_{q}=l\}$. Then, for any function $f:\mathbb{R}^{q}\rightarrow \mathbb{R}$, the mixed partial derivative of order $l$, $\frac{D^{{\bf i}(l)}f}{{\partial u_{1}^{i_{1}}}\ldots {\partial u_{q}^{i_{q}}}}$ is defined for all ${\bf i}$ such that $|{\bf i}|=l$. With partial derivative thus defined, the Lipschitz class $\Lambda^{\alpha}(M)$ consists of all functions $f(\vec u):[0,1]^{q}\rightarrow \mathbb{R}$ such that $|D^{{\bf i}(l)}f(\vec u)|\le M$ for $l=0,1,\ldots,\lfloor \alpha \rfloor$ and $|D^{{\bf i}(\lfloor \alpha \rfloor)}f(v)-D^{{\bf i}(\lfloor \alpha \rfloor)}f(w)|\le M||v-w||^{\alpha^{'}}$ with $\alpha^{'}=\alpha-\lfloor \alpha \rfloor$. The H\"{o}lder smoothness index $\alpha$ will be used to measure local smoothness of the function $f$. Then, the following theorem shows that our estimator achieves optimal local adaptation uniformly over the same families of distributions of $\vec X_{i}$ and $\eps_{i}$ as before. 
\begin{thm}\label{lc_adapt}
Assume that $f\in \Lambda^{\alpha}(M)$ on $[0,1]^{q}$. Suppose the wavelet $\psi$ is $r$-regular, $r\ge \alpha>\frac{q}{6}$, and $\vec u_{0}\in (0,1)^{q}$ is a fixed point. Then, the estimator $\hat f_{n}$ defined in \eqref{wave_est}, satisfies
\[
\sup_{h_{1}\in {\cal G}_{1},h_{2}\in {\cal G}_{2}}\sup_{f\in \Lambda^{\alpha}(M)}\bE\,\|\hat f_{n}(\vec u_{0})-f(\vec u_{0})\|^{2}_{2}\le C\left(\frac{\log n}{n}\right)^{2\alpha/2\alpha+q}.
\]
\end{thm}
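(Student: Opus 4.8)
The plan is to reduce the problem to an idealized Gaussian sequence model and then bound the pointwise risk level by level, the single logarithmic factor emerging from the block thresholding step; all constants stay uniform over $h_1\in\mathcal{G}_1$, $h_2\in\mathcal{G}_2$ because those classes control only $h(0)\ge\eps_1$, the local Lipschitz constant of $h$, and a low-order moment of $\rho_{\vec i}$, which are the sole features of the error law that enter the bounds. I would first pass to \eqref{approx}: applying the median coupling of Corollary \eqref{cor_cpl} in each bin (with $\kappa\asymp n^{1/4}$ observations), the bin median is Gaussian up to a remainder of per-bin variance $O(\kappa^{-2})$, a factor $O(n^{-1/4})$ below the noise level $\sigma^2=1/(4h^2(0)n)$. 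Since the discrete wavelet transform is orthonormal, the deterministic errors $\eps_{j,\vec k}^i$ and the stochastic errors $\xi_{j,\vec k}^i$ of \eqref{approx} each contribute $o(\sigma^2)$ per coefficient and do not affect the rate, exactly the negligibility already invoked for Theorem \eqref{gl_adapt}; it then remains to treat the idealized model $y_{j,\vec k}^i=\breve\theta_{j,\vec k}^i+\sigma z_{j,\vec k}^i$.

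Writing $g=f+b$ for the (essentially constant) median bias, I would split $\hat f_n(\vec u_0)-f(\vec u_0)=(\hat g(\vec u_0)-g(\vec u_0))-(\hat b-b)$ and bound the risk by $2\bE(\hat g(\vec u_0)-g(\vec u_0))^2+2\bE(\hat b-b)^2$. The correction term is negligible: the full- and half-bin medians are independent across the $V\asymp n^{3/4}$ bins, so the estimator \eqref{est_med} has variance $O(1/(V\kappa))=O(1/n)$, while its squared bias is $o((\log n/n)^{2\alpha/(2\alpha+q)})$ by the halfbin construction together with the Lipschitz bias analysis of the coupling; both lie below the target rate since $2\alpha/(2\alpha+q)<1$.

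The core is the pointwise risk $\bE(\hat g(\vec u_0)-g(\vec u_0))^2$, which I would control level by level. Compact support of $\psi$ leaves only $O(1)$ wavelets nonvanishing at $\vec u_0$ per level, with $|\psi_{j,\vec k}^i(\vec u_0)|\le C2^{jq/2}$, while $f\in\Lambda^\alpha(M)$ together with $r$-regularity ($r\ge\alpha$) gives $|\theta_{j,\vec k}^i|\le C2^{-j(\alpha+q/2)}$; this is what transfers sequence risk to the point. The truncation tail $\sum_{j\ge J}$ is bounded by $C\sum_{j\ge J}2^{-2j\alpha}\asymp 2^{-2J\alpha}=T^{-2\alpha}\asymp n^{-3\alpha/(2q)}$, and the hypothesis $\alpha>q/6$ is exactly the inequality $3\alpha/(2q)>2\alpha/(2\alpha+q)$ that makes this negligible against the target; this is the precise point at which higher dimension forces greater smoothness. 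For $j_0\le j<J$ I would apply the multivariate BlockJS oracle inequality to \eqref{block_s} and split at the level $J_1$ with $2^{J_1}\asymp(n/\log n)^{1/(2\alpha+q)}$. The logarithm enters here: an isolated coefficient at $\vec u_0$ is retained only when it alone carries its block past the threshold $\lambda_*L\sigma^2$, i.e. when its magnitude exceeds order $\sqrt{L}\,\sigma$, which with the calibration $L\asymp\log n$ is of order $\sqrt{\log n/n}$. Hence for $j\le J_1$ the relevant coefficients are kept and the contribution is variance-dominated, $\sum_{j\le J_1}\sigma^2 2^{jq}\asymp\sigma^2 2^{J_1 q}\asymp n^{-2\alpha/(2\alpha+q)}(\log n)^{-q/(2\alpha+q)}$; for $j>J_1$ they are shrunk to zero with overwhelming probability and the contribution is bias-dominated, $\sum_{j>J_1}2^{-2j\alpha}\asymp 2^{-2J_1\alpha}\asymp(\log n/n)^{2\alpha/(2\alpha+q)}$, which dominates and yields the claimed rate.

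The main obstacle is this last step: establishing the multivariate BlockJS oracle inequality and applying it \emph{pointwise}. Parseval is unavailable at a point, so the amplification $|\psi_{j,\vec k}^i(\vec u_0)|^2\asymp 2^{jq}$ must be summed against sharp per-block risk bounds, and the delicate regime is the block of $\vec u_0$ near the crossover $J_1$, where I must bound both the probability that a near-null block is erroneously kept and that a signal-bearing block is erroneously killed. These require Gaussian concentration of $S_{j,u,i}^2$ about its mean, and the choice $\lambda_*-\log\lambda_*=3$ is exactly what keeps the James--Stein risk within a constant of the ideal and produces the single logarithmic factor characteristic of local adaptive estimation. Tracking the coupling and oracle-inequality constants so that they depend only on $(\eps_1,\eps_2,\eps_3,\eps_4)$ and $M$ then delivers the uniformity over $\mathcal{G}_1$, $\mathcal{G}_2$ and over $f\in\Lambda^\alpha(M)$.
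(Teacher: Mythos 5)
Your proposal is correct and follows essentially the same route as the paper: reduction to the idealized sequence model via the median coupling, negligibility of the bias-correction term $\hat b_{\vec l}$ and of the deterministic/stochastic wavelet-domain errors, the H\"{o}lder coefficient decay $|\theta_{j,\vec k}^{i}|\le C2^{-j(q/2+\alpha)}$ combined with the $O(1)$ active wavelets per level, the block-thresholding oracle inequality (the paper's Proposition \eqref{proporacle}), and the truncation bound $T^{-2\alpha}\asymp n^{-3\alpha/(2q)}$ whose negligibility is exactly $\alpha>q/6$. Your explicit split at $J_{1}$ with $2^{J_{1}}\asymp(n/\log n)^{1/(2\alpha+q)}$ is just an unpacked version of the paper's summation of $2^{jq/2}\bigl[\min(2^{-j(q+2\alpha)}+T^{-2\alpha\wedge 1}2^{-jq},Ln^{-1})+n^{-2+\tau}\bigr]^{1/2}$ (which handles the cross-level terms via the triangle inequality $\bE(\sum_i X_i)^2\le(\sum_i(\bE X_i^2)^{1/2})^2$, precisely the point you flag about Parseval failing at a point), so the two arguments coincide in substance.
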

\begin{rem}
Note that the smoothness index of the Besov space scale that we used must remain above the ratio $\frac{q}{6}$. This can be explained by the additional difficulty of uniform estimation over a wide ranging scale of Besov spaces in higher dimensions. In other words, the higher the dimension is, the smoother must the functional scale remain to enable a uniform in mean squared error risk estimation over it.
\end{rem}

\section{Discussion and a future research}

We constructed an estimator of the functional component $f$ in the multivariate partial linear model \eqref{model1} under relatively simple conditions to stress the basic ideas. For example, the assumption of equispaced grid on each edge of the $q$-dimensional cube $[0,1]^{q}$ (standard fixed design) reflects the situation where the data points $U_{\vec i}$ are obtained experimentally.  A more realistic assumption would be to allow for non-equispaced grid; our approach, however, can be modified to account for such a possibility. 

A simple option would be to ignore the fact that the grid is non-equispaced, and keep using the same binning approach as we defined earlier. In this case, bins will have different numbers of observations and the resulting medians will have different variances. This implies that a multivariate wavelet smoothing procedure that accounts for heteroskedasticity will have to be used. To the best of our knowledge, such a procedure has been proposed so far only in the univariate case in \cite{kovac2000extending}. However, a generalization of their procedure to the multivariate case is fairly straightforward. The other possible approach to this problem would be to bin observations in such a way that each bin contains the same number of observations; this would imply that, for each edge of the cube $[0,1]^{q}$, it will be necessary to adjust length of intervals according to the density of the design. This will result in irregularly spaced but homoskedastic data. There are a number of approaches that have been proposed for handling of irregularly spaced data in the wavelet shrinkage context; some of the better known ones are \cite{hall1997interpolation}, \cite{antoniadis1998wavelet}, \cite{cai1998wavelet}, \cite{sardy1999wavelet}, \cite{pensky2001non}, \cite{brown2002asymptotic}, \cite{zhang2002wavelet}, and \cite{amato2006wavelet}. Almost all of them treat the univariate case only; however, a multivariate wavelet thresholding procedure that can adapt to local changes in the smoothness of the regression function and to the distribution of the design has been proposed in \cite{kohler2008multivariate}. That procedure was proposed for a multivariate {\it random} design. However, since the true density of the design is not usually known, in either case one can simply construct an orthonormal wavelet basis of the space $L_{2}[0,1]^{q}(\mu_{n})$ where $\mu_{n}$ is the measure corresponding to the empirical distribution function of $U_{\vec i}$. 

Another issue of importance is the fact that the multivariate curve estimation itself is sometimes regarded as problematic since, even for relatively small number of dimensions $q$, the ``curse of dimensionality" begins to manifest itself and minimax rates of convergence becomes unacceptably slow. In practice, however, the true complexity of the multivariate curve may be considerably lower simply due to the fact that the {\it real} multivariate data are not often truly isotropic. As an example, \cite{scott2015multivariate} remarks in Chapter $2$ that ``Multivariate data in $\bR^{d}$ are almost never $d$-dimensional. That is, the {\it underlying} structure of data in $\bR^{d}$ is almost always of dimensions lower than $d$".  This would imply that it makes sense to model the function $f$ in our model $\eqref{model1}$ as a member of some {\it anisotropic} functional class, e.g. the anisotropic Besov classes $B^{\vec \alpha}_{\vec s,t}(M)$ where parameters $\vec \alpha$ and $\vec s$ are now both $q$-dimensional vectors. Such classes were rigorously defined in \cite{besov1979integral} and the multivariate wavelet thresholding procedure for a multivariate anisotropic Gaussian white noise model was proposed in \cite{neumann2000multivariate}; for a more recent approach to this topic see also \cite{autin2014hyperbolic}. The next sensible step is to allow for a function $f$ to belong to an anisotropic functional class and to obtain an estimator of $f$ that is, again, adaptive to a range of such functional classes and robust with respect to a range of distributions of the design vector $X_{\vec i}$ and of the distributions of random errors $\xi_{\vec i}$. The results of this ongoing work will be reported elsewhere. 

\section{Appendix}

In order to prove our adaptation results, we need to start with the following Proposition that provides us with an expansion of the median of binned observations. That expansion shows that, up to small stochastic and deterministic errors, that median has an approximately normal distribution. An important fact that we need to use in order to prove this Proposition is that any Besov ball $B^{\alpha}_{s,t}(M)$ can be embedded into a H\"{o}lder ball with the smoothness index $d=\min\left(\alpha-\frac{q}{s},1\right)$; for details see e.g. \cite{meyer1995wavelets}. We also remark here that, in this section, we use the notation $C$ for a generic positive constant that can be different from one line to another. 
\begin{prop}\label{med_exp}
Let the function $f\in B^{\alpha}_{s,t}(M)$ and $d\doteq \min\left(\alpha-\frac{q}{s},1\right)$. The median $Q_{\vec l}$ of observations that belong to the $\vec l$th bin can be written as 
\[
\sqrt{\kappa}Q_{\vec l}=\sqrt{\kappa}f\left(\frac{\vec l}{T}\right)+\sqrt{\kappa}b_{\vec l}+\frac{1}{2}Z_{\vec l}+\eps_{\vec l}+\zeta_{\vec l}
\]
where \begin{enumerate}
\item $Z_{\vec l}\sim N(0,1/h^{2}(0));$
\item $\eps_{\vec l}$ are constants such that $\vert \eps_{\vec l}\vert\le C\sqrt{\kappa}q^{d/2}T^{-d};$
\item$\zeta_{\vec l}$ are independent random variables such that for any $r>0$ 
\[
\bE\,\vert \zeta_{\vec l}\vert^{r} \le C_{r}\kappa^{-r/2}+C_{r}\kappa^{r/2}q^{dr/2}T^{-dr}
\]
where $C_{r}$ is a positive constant that depends on $r$ only; moreover, for any $a>0$
\begin{equation}\label{prineq}
P(\vert \zeta_{\vec l}\vert>a)\le C_{r}(a^{2}\kappa)^{-r/2}+C_{r}(a^{2}T^{2d}/\kappa q^{d})^{-r/2}. 
\end{equation}
\end{enumerate}
\end{prop}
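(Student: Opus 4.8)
The plan is to decompose $\sqrt{\kappa}Q_{\vec l}$ by separating three effects: the value of $f$ at the bin reference point $\frac{\vec l}{T}$, the median of the pure noise $\rho_{\vec i}$ within the bin, and the bias induced by the variation of $f$ across the bin. Writing $g_{\vec i}:=f(U_{\vec i})-f\left(\frac{\vec l}{T}\right)$ for $\vec i\in D_{\vec l}$, every observation in the bin reads $Y_{\vec i}=f\left(\frac{\vec l}{T}\right)+g_{\vec i}+\rho_{\vec i}$, so that $Q_{\vec l}=f\left(\frac{\vec l}{T}\right)+\mathrm{med}_{\vec i\in D_{\vec l}}(g_{\vec i}+\rho_{\vec i})$. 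I would then compare this perturbed median to $\eta_{\vec l}=\mathrm{med}_{\vec i\in D_{\vec l}}\rho_{\vec i}$, the median of the pure noise, whose expectation is $b_{\vec l}$, and split the overall remainder into a deterministic piece $\eps_{\vec l}$ and a stochastic piece $\zeta_{\vec l}$.

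For the within-bin variation, first invoke the H\"older embedding recalled before the statement: since $f\in B^{\alpha}_{s,t}(M)$ embeds into a H\"older ball of index $d=\min\left(\alpha-\frac{q}{s},1\right)$, and every point of $D_{\vec l}$ lies within $\ell^{2}$-distance $\sqrt{q}/T$ of $\frac{\vec l}{T}$, we obtain $\sup_{\vec i\in D_{\vec l}}|g_{\vec i}|\le M\,q^{d/2}T^{-d}=:\delta$. The key observation is that the sample median is monotone in the data: since $\rho_{\vec i}-\delta\le g_{\vec i}+\rho_{\vec i}\le \rho_{\vec i}+\delta$ pointwise, the corresponding order statistics, hence the medians, are sandwiched, giving $R_{\vec l}:=\mathrm{med}(g+\rho)-\eta_{\vec l}\in[-\delta,\delta]$ deterministically. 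I would then set $\eps_{\vec l}:=\bE\,[\sqrt{\kappa}R_{\vec l}]$, which is a constant satisfying $|\eps_{\vec l}|\le \sqrt{\kappa}\,\delta=C\sqrt{\kappa}\,q^{d/2}T^{-d}$, and route the centered remainder $\sqrt{\kappa}R_{\vec l}-\eps_{\vec l}$, bounded in absolute value by $2\sqrt{\kappa}\delta$, into $\zeta_{\vec l}$.

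Next, apply Corollary \eqref{cor_cpl} to the $\kappa$ i.i.d.\ errors in the bin (taken odd by the stated convention) to couple $\eta_{\vec l}$ with a standard normal $Z$: on the event $\{|Z|\le\eps\sqrt{\kappa}\}$ this yields $\sqrt{\kappa}\,\eta_{\vec l}=\frac{1}{2}Z_{\vec l}+W_{\vec l}$ with $Z_{\vec l}:=Z/h(0)\sim N(0,1/h^{2}(0))$ and $|W_{\vec l}|\le C\kappa^{-1/2}(1+|Z_{\vec l}|^{2})$, using that $h(0)$ is bounded above and below; the complementary event has Gaussian-small probability and is absorbed via a crude deterministic bound on the coupling map. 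Since $\bE\,Z_{\vec l}=0$, taking expectations identifies $\sqrt{\kappa}\,b_{\vec l}$ with $\bE\,W_{\vec l}$, so the coupling contributes a further centered error $W_{\vec l}-\bE\,W_{\vec l}$ of order $\kappa^{-1/2}$ in every moment. Setting $\zeta_{\vec l}:=(\sqrt{\kappa}R_{\vec l}-\eps_{\vec l})+(W_{\vec l}-\bE\,W_{\vec l})$ and using $\bE(1+|Z_{\vec l}|^{2})^{r}<\infty$ gives $\bE\,|\zeta_{\vec l}|^{r}\le C_{r}\kappa^{-r/2}+C_{r}\kappa^{r/2}q^{dr/2}T^{-dr}$; applying Markov's inequality to the two pieces separately then produces the tail bound \eqref{prineq}. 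Independence of the $\zeta_{\vec l}$ across bins follows because distinct bins use disjoint observations and the couplings can be drawn independently.

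The main obstacle is that the variables $g_{\vec i}+\rho_{\vec i}$ inside a bin are independent but \emph{not} identically distributed, so the coupling inequality, stated for i.i.d.\ samples, cannot be applied to them directly; the monotonicity/sandwiching step is precisely what reduces the problem to the i.i.d.\ noise median $\eta_{\vec l}$ at the cost of the controllable bias of size $\delta$. A secondary technical point requiring care is the rare event on which the coupling bound fails, where one must ensure that the possibly heavy-tailed sample median does not inflate the moments of $\zeta_{\vec l}$; this is handled exactly as in \cite{brown2008robust} by exploiting the Gaussian tail of $Z$.
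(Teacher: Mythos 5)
Your proposal is correct and follows essentially the same route as the paper's proof: the same decomposition with $\eps_{\vec l}$ the expected binning bias and $\zeta_{\vec l}$ split into the centered binning error (your $\sqrt{\kappa}R_{\vec l}-\eps_{\vec l}$, the paper's $\zeta_{1\vec l}$) and the coupling error (your $W_{\vec l}-\bE\,W_{\vec l}$, the paper's $\zeta_{2\vec l}$), using the same H\"{o}lder embedding with the $q^{d/2}T^{-d}$ bin-diameter bound, the same median monotonicity/sandwich inequality, Corollary \eqref{cor_cpl} for the normal approximation, and Markov's inequality for the tail. Your explicit remarks on why the coupling cannot be applied to the non-identically distributed $g_{\vec i}+\rho_{\vec i}$ directly, on the moment control of the heavy-tailed median on the bad event (which, as in the paper, rests on Assumption \eqref{m_asmd} via the argument of \cite{brown2008robust}), and on independence across bins make explicit points the paper treats tersely, but introduce no new ideas or gaps.
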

 
\begin{proof}
In this proof, we denote the cdf of the standard normal distribution $\Phi(\cdot)$. Define $Z_{\vec l}=\frac{1}{h(0)}\Phi^{-1}(G(\eta_{\vec l}))$ where $G$ is the distribution of the median $\eta_{\vec l}$. Due to Theorem \eqref{thm_cpl}, we know that the rescaled median of errors $\sqrt{4\kappa}\eta_{\vec l}$ can be well approximated by a mean zero normal random variable with the variance equal to $\frac{1}{h^{2}(0)}$. Next, we  define 
\begin{align*}
&\eps_{\vec l}=\sqrt{\kappa}\bE\,Q_{\vec l}-\sqrt{\kappa}f\left(\frac{\vec l}{T}\right)-\sqrt{\kappa}b_{\vec l}\\
&=\bE\,\left\{\sqrt{\kappa}Q_{\vec l}-\sqrt{\kappa}f\left(\frac{\vec l}{T}\right)-\sqrt{\kappa}\eta_{\vec l}\right\}.
\end{align*}
What we have in the above is the deterministic component of the approximation error due to binning. Clearly, for the $\vec l$th bin $D_{\vec l}$, we have 
\begin{align}\label{max}
&\min_{u_{\vec i}\in D_{\vec l}}\left[f(u_{\vec i})-f\left(\frac{\vec l}{T}\right)\right]\\
&\le Q_{\vec l}-\eta_{\vec l}-f\left(\frac{\vec l}{T}\right)\le \max_{u_{\vec i}\in D_{\vec l}}\left[f(u_{\vec i})-f\left(\frac{\vec l}{T}\right)\right]\nonumber
\end{align}
Since the function $f$ is in a H\"{o}lder ball with the smoothness index $d=\min\left(\alpha-\frac{q}{s},1\right)$,  we have 
\begin{align*}
&\vert \eps_{\vec l}\vert \le \sqrt{\kappa}\bE\,\left\vert Q_{\vec l}-f\left(\frac{\vec l}{T}\right)-\eta_{\vec l}\right\vert \\
&\le \sqrt{\kappa}\max_{u_{\vec i}\in D_{\vec l}}\left\vert f(u_{\vec i})-f\left(\frac{\vec l}{T}\right)\right\vert \le C\sqrt{\kappa}q^{d/2}T^{-d}
\end{align*}

Now, it becomes necessary to characterize the random error of our approximation. First, we define $\zeta_{\vec l}=\sqrt{\kappa}Q_{\vec l}-\sqrt{\kappa}f\left(\frac{\vec l}{T}\right)-\sqrt{\kappa}b_{\vec l}-\eps_{\vec l}-\frac{1}{2}Z_{\vec l}$. Note that $\bE\,\zeta_{\vec l}=0$ and this random error can be represented as the sum of two components, $\zeta_{1\vec l}=\sqrt{\kappa}Q_{\vec l}-\sqrt{\kappa}f\left(\frac{\vec l}{T}\right)-\sqrt{\kappa}\eta_{\vec l}-\eps_{\vec l}$ and $\zeta_{2\vec l}=\sqrt{\kappa}\eta_{\vec l}-\sqrt{\kappa}b_{\vec l}-\frac{1}{2}Z_{\vec l}$. The first component $\zeta_{1\vec l}$ represents the random error resulting from the binning of observations while $\zeta_{2\vec l}$ is the error resulting from approximation of the median of random errors with a normal random variable. First, the error $\zeta_{1\vec l}$ is bounded due to \eqref{max} as $\vert \zeta_{1\vec l}\vert \le C\sqrt{\kappa}q^{d/2}T^{-d}$. Next, using the Corollary \eqref{cor_cpl}, we can bound the absolute value of the second random error term as $\vert \zeta_{2\vec l}\vert \le \frac{C}{\kappa^{1/2}}(1+\vert Z_{\vec l}\vert^{2})$ when $\vert Z_{\vec l}\vert \le \eps\sqrt{\kappa}$ for some $\eps>0$.  Thus, for any fixed $r\ge 0$, 
\begin{align*}
&\bE\,\vert \zeta_{2\vec l}\vert^{r}=\bE\,\vert \zeta_{2\vec l}\vert^{r}I(\vert Z_{\vec l}\vert \le \eps\sqrt{\kappa})+\bE\,\vert \zeta_{2\vec l}\vert^{r}I(\vert Z_{\vec l}\vert > \eps\sqrt{\kappa})\\
&\le C\kappa^{-r/2}+\bE\,\vert \zeta_{2\vec l}\vert^{r}I(\vert Z_{\vec l}\vert > \eps\sqrt{\kappa})\\
&\le C\kappa^{-r/2}+\left\{\bE\,\vert \zeta_{2\vec l}\vert^{2r}\right\}^{r/2}\exp\left(-\frac{\eps^{2}}{\kappa}\right)
\end{align*}
since, due to the Mill's ratio inequality, we can quickly verify that $P(\vert Z_{\vec l}\vert > \eps\sqrt{\kappa})\le \exp\left(-\frac{\eps^{2}}{\kappa}\right)$.  

To continue, we will need to use the Assumption (A2). Using the same argument as in \cite{brown2008robust}, we can show that the density of the centered median $\eta_{\vec l}-b_{\vec l}$ $g(x)$ is such that the sample centered median has any finite moments and, therefore, $\bE\,\vert\sqrt{\kappa}(\eta_{\vec l}-b_{\vec l})\vert^{2r}\le \kappa^{r}\bE\,\vert\eta_{\vec l}\vert^{2r}\le D_{r}\kappa^{r}$ for some positive constant $D_{r}$ that does not depend on $n$. This allows us to conclude that  $\bE\,\vert \zeta_{2\vec l}\vert^{r}\le C_{r}\kappa^{-r/2}$ since the normal random variable $Z_{j}$ has finite moments of any order. Finally,  
\begin{align*}
&\bE\,\vert \zeta_{\vec l}\vert^{r}\le 2^{r-1}\left(\bE\,\vert \zeta_{1\vec l}\vert^{r}+\bE\,\vert \zeta_{2\vec l}\vert^{r}\right)\\
&\le C_{r}\kappa^{-r/2}+C_{\vec r}\kappa^{r/2}q^{dr/2}T^{-dr}.
\end{align*} 
The inequality \eqref{prineq} can now be obtained immediately using the Markov's inequality. 
\end{proof}
\begin{rem}
In the proof of our adaptation results, we will assume everywhere that $h(0)$ is known and equal to $1$. This can be done because $h^{-2}(0)$ can always be estimated in such a way that the difference between $h^{-2}(0)$ and $\hat h^{-2}(0)$ is bounded in probability as $O_{p}(n^{-\delta})$  for some $\delta>0$ and, moreover, $P(|\hat h^{-2}(0)-h^{-2}(0)|\ge n^{-\delta})=c_{l}n^{-l}$ for any $l \ge 1$. Such an estimator can be constructed as a properly normalized sum of ordered squared difference of medians of observations $Q_{\vec l}$, $\vec 1 \le \vec l \le V$; to order medians, one can use, for example, a lexicographical order of their indices $\vec l$.. To make the notation easier, we denote two successive medians $Q_{2k-1}$ and $Q_{2k}$, using a scalar index to avoid confusion here. Then, the needed estimator of $h^{-2}(0)$ will be proportional to $\sum_{k}(Q_{2k-1}-Q_{2k})^{2}$. When such an estimator is constructed, one can immediately check that everywhere in proofs asymptotic properties do not change if $\lambda^{*}(1+O(n^{-\delta}))$ is used instead of $\lambda^{*}$. The details are similar to the argument of \cite{brown2008robust} and are omitted for conciseness.  
\end{rem}
The next proposition is needed to obtain a uniform bound of the mean squared error risk of estimating the expected error median. Its proof is very similar to that of Lemma $5$ in \cite{brown2008robust} and is omitted for brevity.  
\begin{prop}\label{med_prop}
Let the expectation of the error median for $\vec l$th bin and its estimate be $b_{\vec l}$ and $\hat b_{\vec l}$, as defined earlier in \eqref{est_med}. Then, 
\[
\sup_{h\in {\cal H}}\left\vert b_{\vec l}+\frac{h^{'}(0)}{8h^{3}(0)\kappa}\right\vert \le C\kappa^{-2},
\]
and
\[
\sup_{h\in {\cal H}}\sup_{f\in B^{\alpha}_{s,t}(M)}\bE\,(\hat b_{\vec l}-b_{\vec l})^{2}\le C\max\{q^{d}T^{-2d},\kappa^{-4}\}
\]
for any index $\vec l$. 
\end{prop}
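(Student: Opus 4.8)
The plan is to establish the two displays in turn: first the second-order expansion of the expected bin median $b_{\vec l}$, and then a bias--variance decomposition of $\hat b_{\vec l}$ that draws on that expansion together with Proposition \eqref{med_exp}.

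For the first bound I would compute $b_{\vec l}=\bE\,\eta_{\vec l}$ through the quantile function. Writing $H$ for the common distribution function of the $\rho_{\vec i}$ and using that $\kappa=2k+1$ is odd, the bin median is $\eta_{\vec l}=H^{-1}(B)$ with $B\sim\mathrm{Beta}(k+1,k+1)$ the median of $\kappa$ i.i.d.\ uniforms, so that $\bE\,B=\tfrac12$ and $\mathrm{Var}(B)=\tfrac{1}{4(\kappa+2)}=\tfrac{1}{4\kappa}+O(\kappa^{-2})$. Taylor expanding $H^{-1}$ about $\tfrac12$, with $H^{-1}(\tfrac12)=0$, $(H^{-1})'(\tfrac12)=1/h(0)$ and $(H^{-1})''(\tfrac12)=-h'(0)/h^{3}(0)$, gives $\bE\,\eta_{\vec l}=\tfrac12(H^{-1})''(\tfrac12)\mathrm{Var}(B)+R=-\tfrac{h'(0)}{8h^{3}(0)\kappa}+R$, the asserted leading term. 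The cubic term drops out because $B$ is symmetric about $\tfrac12$, and the quartic term is $O(\mathrm{Var}(B)^{2})=O(\kappa^{-2})$ once $(H^{-1})^{(4)}$ is bounded near $\tfrac12$; the latter, together with $(H^{-1})''$ and $(H^{-1})'''$, is controlled uniformly on ${\cal H}$ by the bounds $\eps_1\le h\le 1/\eps_1$ and $|h^{(3)}|\le\eps_4$. The contribution of the event that $B$ departs from $\tfrac12$, where these local bounds fail, I would absorb into $R$ using the exponential concentration of $B$ about $\tfrac12$ and the moment bound of Assumption \eqref{m_asmd}, exactly as in the tail estimates of Proposition \eqref{med_exp}. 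Since all constants depend only on $\eps_1,\dots,\eps_4$, this yields $R=O(\kappa^{-2})$ uniformly over ${\cal H}$.

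For the second bound, note first that the right-hand side of \eqref{est_med} does not depend on $\vec l$, so $\hat b_{\vec l}=\hat b:=\tfrac1V\sum_{\vec 1\le\vec l'\le\vec T}(Q^{*}_{\vec l'}-Q_{\vec l'})$ is a single global average, and decompose $\bE\,(\hat b-b_{\vec l})^{2}=(\bE\,\hat b-b_{\vec l})^{2}+\mathrm{Var}(\hat b)$. The summands $Q^{*}_{\vec l'}-Q_{\vec l'}$ are independent across $\vec l'$ because distinct bins use disjoint observations, and Proposition \eqref{med_exp} (with $r=2$, applied to both the full bin and the half-bin) gives $\mathrm{Var}(Q^{*}_{\vec l'}-Q_{\vec l'})\le C(\kappa^{-1}+q^{d}T^{-2d})$; averaging $V$ independent terms divides this by $V$, and since $V\kappa\asymp n$ with $\kappa\asymp n^{1/4}$ we obtain $\mathrm{Var}(\hat b)\le C(\tfrac1{V\kappa}+q^{d}T^{-2d})\le C\max\{q^{d}T^{-2d},\kappa^{-4}\}$. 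For the bias, Proposition \eqref{med_exp} yields $\bE\,Q_{\vec l'}=f(\vec l'/T)+b_{\vec l'}+O(q^{d/2}T^{-d})$ and the corresponding identity for $Q^{*}_{\vec l'}$, whose centre lies in the same bin; by the embedding of $B^{\alpha}_{s,t}(M)$ into the H\"older ball of index $d$ the two function values differ by at most $C(\sqrt q/T)^{d}$, so $\bE\,(Q^{*}_{\vec l'}-Q_{\vec l'})=(b^{*}_{\vec l'}-b_{\vec l'})+O(q^{d/2}T^{-d})$. The half-bin versus full-bin construction is arranged so that the leading $O(\kappa^{-1})$ median biases of the two medians reproduce $b_{\vec l}$; applying the first part of the Proposition to both bin sizes shows $b^{*}_{\vec l'}-b_{\vec l'}$ agrees with $b_{\vec l}$ up to $O(\kappa^{-2})$, whence $\bE\,\hat b-b_{\vec l}=O(\kappa^{-2})+O(q^{d/2}T^{-d})$. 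Squaring and combining with the variance gives the claimed $C\max\{q^{d}T^{-2d},\kappa^{-4}\}$.

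I expect the bias, not the variance, to be the main obstacle. Because $b_{\vec l}$ is itself only of order $\kappa^{-1}$ --- and collapses to order $\kappa^{-2}$ precisely when $h$ is symmetric about zero, which is the case for the densities produced by Proposition \eqref{assmp1} --- the half-bin correction must cancel the leading $O(\kappa^{-1})$ term and leave a genuine $O(\kappa^{-2})$ residual. Securing this cancellation uniformly over ${\cal H}$ forces the second-order median expansion of the first part to hold with a remainder that is uniform both in the bin size and in $h\in{\cal H}$; handling the region where $B$ leaves a neighbourhood of $\tfrac12$ and the local Lipschitz and derivative bounds on $h$ no longer apply is the delicate step, and is exactly where the moment condition of Assumption \eqref{m_asmd} and the exponential tail of $B$ are needed, mirroring the proof of Lemma~5 in \cite{brown2008robust}.
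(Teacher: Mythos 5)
Your proposal is correct and takes essentially the same route as the paper: the first bound via the Beta--quantile Taylor expansion of $\bE\,\eta_{\vec l}$ around $\tfrac12$ (odd terms vanishing by symmetry, the fourth-order remainder controlled uniformly over ${\cal H}$, and the tail region handled by concentration plus the moment assumption), and the second bound via the same ingredients as the paper's decomposition into $R_{1},\ldots,R_{5}$ --- the H\"{o}lder-embedding bound on the function differences, the cancellation $b^{*}_{\vec l}-2b_{\vec l}=O(\kappa^{-2})$ obtained by applying the first display to both bin sizes, and Proposition \eqref{med_exp} for the deterministic and stochastic error terms. Your bias--variance organization is just a repackaging of the paper's five-term split, and you correctly single out the half-bin cancellation of the leading $O(\kappa^{-1})$ median bias as the crux, exactly as in the paper's argument.
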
 

\begin{proof}
Without loss of generality, assume that $\kappa=2\nu+1$. Then, the expectation of the $\vec l$th median is
\[
\bE\,\eta_{\vec l}=\int x\frac{2\nu+1}{(\nu!)^{2}}H^{\nu}(x)[1-H(x)]^{\nu}\,dH(x)
\]
where $H(x)$ is the distribution function corresponding to $h(x)$. For any $\delta>0$, define a set $A_{\delta}=\{x:\left\vert H(x)-\frac{1}{2}\right\vert\le \delta\}$. It follows from the definition of the class ${\cal H}$  that there exists a constant $\delta>0$ such that for some $\eps>0$ we have $h^{(3)}(x)\le \frac{1}{\eps}$ and $\eps \le h(x)\le \frac{1}{\eps}$ for any $x\in A_{\delta}$ uniformly for all $h\in {\cal H}$. This property implies that $H^{-1}(x)$ is well defined and is differentiable up to the fourth order for any $x\in A_{\delta}$. Now we can expand the expectation of the median into two parts:
\[
\bE\,\eta_{\vec l}=\left(\int_{A_{\delta}}+\int_{A_{\delta}^{c}}\right)x\frac{2\nu+1}{(\nu!)^{2}}H^{\nu}(x)[1-H(x)]^{\nu}\,dH(x)
\]
Since we earlier established that all of the moments of the median are finite, $Q_{2}$ goes to zero exponentially as $\mu\rightarrow \infty$. Next, we find that 
\begin{align*}
&Q_{1}=\int_{1/2-\delta}^{1/2+\delta}\left(H^{-1}(x)-H^{-1}\left(\frac{1}{2}\right)\right)\frac{(2\nu+1)!}{(\nu!)^{2}}x^{\nu}(1-x)^{\nu}\,dx\\
&=\int_{1/2-\delta}^{1/2+\delta}\left[\frac{1}{2}(H^{-1})^{''}\left(\frac{1}{2}\right)\left(x-\frac{1}{2}\right)^{2}+\frac{(H^{-1})^{(4)}(\tau)}{24}\left(x-\frac{1}{2}\right)^{4}\right]\\
&\times \frac{2\nu+1}{(\nu!)^{2}}x^{\nu}(1-x)^{\nu}\,dx
\end{align*}
since $x^{\nu}(1-x)^{\nu}$ is symmetric around $\frac{1}{2}$. The expression $\frac{2\nu+1}{(\nu!)^{2}}x^{\nu}(1-x)^{\nu}$ is the density function of the Beta$(\nu+1,\nu+1)$ with the mean equal to $\frac{1}{2}$; this, and the fact that $(H^{-1})^{(4)}(\tau)$ is bounded uniformly for all $h\in {\cal H}$ , implies that, in the same way as in \cite{brown2008robust}, $Q_{1}=-\frac{h^{'}(0)}{8h^{3}(0)\kappa}+O\left(\frac{1}{\kappa^{2}}\right)$. 
Recall that we earlier established that (see Proposition \eqref{med_exp}) that 
\[
Q_{\vec l}=f\left(\frac{\vec l}{T}\right)+b_{\vec l}+\frac{1}{2\sqrt{\kappa}}Z_{\vec l}+\frac{1}{\sqrt{\kappa}}\eps_{\vec l}+\frac{1}{\sqrt{\kappa}}\zeta_{\vec l}.
\]
In a similar way, we can write for the median of the ``half" $\vec l$ th bin that 
\[
Q_{\vec l}^{*}=f\left(\frac{\vec l-\vec{ 1/2}}{T}\right)+b_{\vec l}^{*}+\frac{1}{2\sqrt{\nu}}Z_{\vec l}^{*}+\frac{1}{\sqrt{\nu}}\eps_{\vec l}^{*}+\frac{1}{\sqrt{\nu}}\zeta_{\vec l}^{*}
\]
where $\vec {\frac{1}{2}}$ is a $q$-dimensional vector $\left(\frac{1}{2},\ldots,\frac{1}{2}\right)^{'}$, $b_{\vec l}^{*}$ is the  expected median of the errors of all observations in the ``half" $\vec l$th bin, and where $Z_{\vec l}^{*}$, $\eps_{\vec l}^{*}$, and $\zeta_{\vec l}^{*}$ satisfy the Proposition \eqref{med_exp}. Then, the error from the median estimation $\hat b_{\vec l}-b_{\vec l}=\frac{1}{V}\sum_{\vec l}(Q_{\vec l}^{*}-Q_{\vec l})-b_{\vec l}$ can be written as follows:
\begin{align*}
&\hat b_{\vec l}-b_{\vec l}=\frac{1}{V}\sum_{\vec l}\left(f\left(\frac{\vec l-\vec{1/2}}{T}\right)-f\left(\frac{\vec l}{T}\right)\right)+(b_{\vec l}^{*}-2b_{\vec l})\\
&+\left[\frac{1}{\sqrt{\nu}}\frac{1}{V}\sum_{\vec l}\eps_{\vec l}^{*}-\frac{1}{\sqrt{\kappa}}\frac{1}{V}\sum_{\vec l}\eps_{\vec l}\right]\\
&+\left[\frac{1}{2\sqrt{\nu}}\frac{1}{V}\sum_{\vec l}Z_{\vec l}^{*}-\frac{1}{2\sqrt{\kappa}}\frac{1}{V}\sum_{\vec l}Z_{\vec l}\right]\\
&+\left[\frac{1}{\sqrt{\nu}}\frac{1}{V}\sum_{\vec l}\zeta_{\vec l}^{*}-\frac{1}{\sqrt{\kappa}}\frac{1}{V}\sum_{\vec l}\zeta_{\vec l}\right]\\
&\equiv R_{1}+R_{2}+R_{3}+R_{4}+R_{5}.
\end{align*}
Due to the embedding of the Besov ball $B_{s,t}^{\alpha}(M)$ into the H\"{o}lder ball with the smoothness index $d=\min\left(\alpha-\frac{q}{s},1\right)$, the first term is uniformly bounded: $\sup_{f\in B^{\alpha}_{s,t}(M)}R_{1}^{2}\le CT^{-2d}$. The second term is bounded as $\sup_{h\in {\cal H}}R_{2}^{2}\le C\kappa^{-4}$. By Proposition \eqref{med_exp}, the third term is also bounded as $\sup_{h\in {\cal H}, f\in B^{\alpha}_{s,t}(M)}R_{3}^{2}\le CT^{-d}$. Since $Z_{\vec l}^{*}-Z_{\vec l}$ are always independent, we have $\bE\,R_{4}^{2}\le \frac{1}{h^{2}(0)}\left(\frac{1}{\kappa}+\frac{1}{\nu}\right)\frac{1}{T}\le Cn^{-1}$.  Finally, by the Proposition \eqref{med_exp}, we have $\bE\,R_{5}^{2}=o(n^{-1})$. Thus, the overall bound is 
\[
\sup_{h\in {\cal H},f\in B^{\alpha}_{s,t}(M)}\bE\,(\hat b_{\vec l}-b_{\vec l})^{2}\le \max\{T^{-2d},\kappa^{-4}\}.
\] 
\end{proof}

 Due to the Proposition \eqref{med_exp}, we can write
\begin{equation}\label{med_model}
\frac{1}{\sqrt{V}}Q_{\vec l}=\frac{g(\vec l/T)}{\sqrt{V}}+\frac{\eps_{\vec l}}{\sqrt{n}}+\frac{Z_{\vec l}}{2\sqrt{n}}+\frac{\zeta_{\vec l}}{\sqrt{n}}.
\end{equation}

Let $Q$ be the vector of all bin medians $Q_{\vec l}$; such a vector will have the length $V$. Applying the discrete wavelet transform to both sides of \eqref{med_model}, we can expand the empirical wavelet coefficients $y^{i}_{j,\vec k}$ as
\begin{equation}\label{emp_wav}
y^{i}_{j,\vec k}=\breve{\theta}_{j,\vec k}^{i}+\eps_{j,\vec k}^{i}+\frac{1}{2h(0)\sqrt{n}}z_{j,\vec k}^{i}+\xi_{j,\vec k}^{i}
\end{equation}
where $\breve{\theta}_{j,\vec k}^{i}$ are the discrete wavelet coefficients of $g\left(\frac{\vec l}{T}\right)_{\vec 1\le \vec l \le \vec T}$ that are approximately equal to the true wavelet coefficients of $g$ $\theta_{j,\vec k}^{i}$, $\eps_{j,\vec k}^{i}$ are ``small" deterministic approximation errors, $z_{j,\vec k}^{i}$ are i.i.d $N(0,1)$, and $\xi_{j,\vec k}^{i}$ are ``small" stochastic errors. If it can be assumed that $\eps_{j,\vec k}^{i}$ and $\xi_{j,\vec k}^{i}$ are both negligible in some sense, we may be able to treat the model in the wavelet domain as an idealized sequence model
\[
y_{j,\vec k}^{i}\approx \breve{\theta}_{j,\vec k}^{i}+\frac{1}{2h(0)\sqrt{n}}z_{j,\vec k}^{i}
\] 
where $\frac{1}{2h(0)\sqrt{n}}$ plays the role of the noise level.  At this point, we can define a simple estimation procedure for the function $f$. Some auxiliary results are necessary before stating the main result. The first of these results is needed in order to bound the difference between the true wavelet coefficients $\theta^{i}_{j,\vec k}$ of the function $f\left(\frac{\vec l}{T}\right)_{\vec 1 \le \vec l \le \vec T}$ and discrete wavelet coefficients $\breve{\theta}^{i}_{j,k}$. Its proof is straightforward and is, therefore, omitted. 
\begin{lem}\label{bound_dif}
Let $T=2^{J}$ and define $f_{J}(u)=\frac{1}{\sqrt{V}}\sum_{\vec l\le \vec T}\sum_{i=1}^{2^{q}-1}f\left(\frac{\vec l}{T}\right)\psi_{J,\vec l}^{i}(u)$. Then, 
\[
\sup_{f\in B^{\alpha}_{s,t}(M)}\vert|f_{J}-f\vert|^{2}_{2}\le C\cdot q^{d}T^{-2d}
\]
where $d=\min\left(\alpha-\frac{q}{s},1\right)$. Moreover, $|\breve \theta_{j_{0},\vec k}^{i}-\theta_{j,\vec k}^{i}|\le CT^{-d}2^{-j/2}$ and so $\sum_{j=j_{0}}^{J-1}\sum_{\vec 1 \le k \le {\vec 2^{j}}}(\breve \theta_{j,\vec k}^{i}-\theta_{j,\vec k}^{i})^{2}\le CT^{-2d}$. 
\end{lem}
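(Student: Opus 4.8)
The plan is to reduce both coefficient statements to the single $L_{2}$ approximation bound in the first display, obtaining the summed bound from orthonormality and the per-coefficient bound from a support-localized Cauchy--Schwarz estimate. Throughout, $f_{J}$ is read as the scale-$J$ reconstruction of $f$ built from its grid values $f(\vec l/T)$, $\vec 1\le\vec l\le\vec T$; it is the element of $\bV_{J}$ determined by these samples, so that $\breve\theta^{i}_{j,\vec k}=\langle f_{J},\psi^{i}_{j,\vec k}\rangle$ and $\theta^{i}_{j,\vec k}=\langle f,\psi^{i}_{j,\vec k}\rangle$, and their difference is exactly $\langle f_{J}-f,\psi^{i}_{j,\vec k}\rangle$.

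First I would invoke the Besov--H\"older embedding recalled just before Proposition \ref{med_exp}: since $f\in B^{\alpha}_{s,t}(M)$, it lies in a H\"older ball of index $d=\min(\alpha-\frac{q}{s},1)\le 1$, so that $|f(\vec u)-f(\vec v)|\le C\|\vec u-\vec v\|_{2}^{d}$ uniformly over the ball. Then I would establish the first display. The scale-$J$ reconstruction operator is local and uniformly bounded and is exact when $f$ is replaced by a constant, so the value $f_{J}(\vec u)$ is a bounded combination of the samples $f(\vec l/T)$ taken at a fixed number of grid points within $O(1/T)$ of $\vec u$; consequently the pointwise error $|f_{J}(\vec u)-f(\vec u)|$ is controlled by the oscillation of $f$ over a neighbourhood of diameter $O(\sqrt q\,T^{-1})$. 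By the H\"older bound this oscillation is at most $Cq^{d/2}T^{-d}$, whence $\|f_{J}-f\|_{\infty}\le Cq^{d/2}T^{-d}$. Since $[0,1]^{q}$ has unit volume, squaring and integrating gives $\|f_{J}-f\|_{2}^{2}\le Cq^{d}T^{-2d}$; the factor $q^{d}$ is precisely the $2d$-th power of the cell diameter $\sqrt q\,T^{-1}$.

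The two coefficient bounds then follow. For the summed bound I would use that $\{\phi_{j_{0},\vec k}\}\cup\{\psi^{i}_{j,\vec k}:j_{0}\le j\le J-1\}$ is an orthonormal basis of $\bV_{J}$ and that $f_{J}\in\bV_{J}$, so Parseval's identity gives
\[
\sum_{j=j_{0}}^{J-1}\sum_{\vec 1\le\vec k\le{\vec 2}^{j}}\sum_{i=1}^{2^{q}-1}\bigl(\breve\theta^{i}_{j,\vec k}-\theta^{i}_{j,\vec k}\bigr)^{2}\le\|f_{J}-f\|_{2}^{2}\le Cq^{d}T^{-2d}\le CT^{-2d},
\]
the last inequality absorbing the fixed constant $q^{d}$. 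For the per-coefficient bound I would write $\breve\theta^{i}_{j,\vec k}-\theta^{i}_{j,\vec k}=\langle f_{J}-f,\psi^{i}_{j,\vec k}\rangle$ and apply Cauchy--Schwarz over the support of $\psi^{i}_{j,\vec k}$, whose volume is of order $2^{-jq}$; using $\|\psi^{i}_{j,\vec k}\|_{2}=1$ together with the $L_{\infty}$ bound just obtained yields $|\breve\theta^{i}_{j,\vec k}-\theta^{i}_{j,\vec k}|\le Cq^{d/2}T^{-d}2^{-jq/2}$, which, since $q\ge 1$ is fixed, is at least as strong as the stated bound $CT^{-d}2^{-j/2}$.

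The only genuinely technical point is the localization underlying the $L_{\infty}$ estimate and the normalization bookkeeping in the per-coefficient bound: one must verify that the scale-$J$ reconstruction depends only on nearby samples and reproduces constants, and then carry the tensor-product normalization of $\psi^{i}_{j,\vec k}$ and its support size through Cauchy--Schwarz. Given the H\"older embedding these are routine, and because $q$ is fixed all $q$-dependent constants are harmless, which is why the result may fairly be called straightforward.
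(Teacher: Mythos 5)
The paper omits the proof of this lemma as ``straightforward,'' so there is nothing to compare line by line; your argument --- Besov-to-H\"older embedding with exponent $d$, locality plus constant-reproduction (partition of unity) of the scale-$J$ sampling operator to get $\|f_J-f\|_\infty\le Cq^{d/2}T^{-d}$ and hence the $L_2$ bound, then Bessel's inequality for the summed coefficient bound and a support-localized Cauchy--Schwarz estimate for the per-coefficient bound --- is correct and is exactly the standard route the paper intends (it is the multivariate version of the corresponding lemma in \cite{brown2008robust}), with your bound $Cq^{d/2}T^{-d}2^{-jq/2}$ in fact slightly stronger than the stated $CT^{-d}2^{-j/2}$ since $q\ge 1$. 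You were also right to read the displayed definition of $f_J$ (written with $\psi^i_{J,\vec l}$, an apparent typo) as the $\bV_J$ reconstruction from the grid samples via the scaling functions $\phi_{J,\vec l}$: taken literally, the display would annihilate constants because $\int\psi=0$, so your identification $\breve\theta^i_{j,\vec k}=\langle f_J,\psi^i_{j,\vec k}\rangle$ is the only reading under which the lemma holds.
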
 
 Another result that we need is the following proposition that studies the risk of our proposed procedure. 
 \begin{prop}\label{proporacle}
 Let the empirical wavelet coefficients $y_{j,\vec k}^{i}=\breve{\theta}_{j,\vec k}^{i}+\eps_{j,\vec k}^{i}+\frac{1}{2h(0)\sqrt{n}}z_{j,\vec k}^{i}+\xi_{j,\vec k}^{i}$ be as given in \eqref{emp_wav} and let estimated block thresholding coefficients $\hat \theta_{j,\vec k}^{i}$ be as defined in  \eqref{block_s}. Then, for some constant $C>0$, 
 \begin{align}\label{or_ineq}
 &\bE\,\sum_{\vec k\in B_{j,u}^{i}}(\hat \theta_{j,\vec k}^{i}-\breve{\theta}_{j,\vec k}^{i})^{2}\le \\
 &\le \min \left\{4\sum_{\vec k\in B^{i}_{j,u}}([\breve{\theta}_{j,\vec k}^{i}]^{2},8\lambda L n^{-1}\right\}+6\sum_{\vec k\in B^{i}_{j,u}}[\eps_{j,\vec k}^{i}]^{2}+C L n^{-2}\nonumber;
 \end{align}
 also, for any $0<\tau<1$, there exists a constant $C_{\tau}>0$ that depends on $\tau$ only such that for all $\vec k\in B^{i}_{j,u}$
 \[
 \bE\,(\hat \theta_{j,\vec k}^{i}-\breve{\theta}_{j,\vec k}^{i})^{2}\le C_{\tau}\min \left\{\max_{\vec k \in B^{i}_{j,u}}\{(\breve{\theta}_{j,\vec k}^{i}+\eps_{j,\vec k}^{i})^{2}\},Ln^{-1}\right\}+n^{-2+\tau}.
 \]
  \end{prop}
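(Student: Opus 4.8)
The plan is to adapt the block James--Stein oracle inequality of \cite{cai1999adaptive} to the contaminated multivariate sequence model \eqref{emp_wav}, following the template of \cite{brown2008robust}. Throughout I would first invoke the Remark following Proposition \eqref{med_exp} to replace $\hat h^{2}(0)$ by the true value and set $h(0)=1$, so that the noise level is $\sigma^{2}=1/(4n)$ and the shrinkage rule \eqref{block_s} reads $\hat\theta_{j,\vec k}^{i}=(1-\lambda_{*}L\sigma^{2}/S_{j,u,i}^{2})_{+}\,y_{j,\vec k}^{i}$; the error from this replacement is absorbed into the remainder exactly as in \cite{brown2008robust}. Writing $\mu_{j,\vec k}^{i}=\breve\theta_{j,\vec k}^{i}+\eps_{j,\vec k}^{i}$ for the effective block mean and splitting the noise in \eqref{emp_wav} into its Gaussian part $\sigma z_{j,\vec k}^{i}$ and the stochastic remainder $\xi_{j,\vec k}^{i}$, I would compare $\hat\theta$ with the idealized James--Stein estimate $\tilde\theta$ computed from the uncontaminated data $\mu+\sigma z$ over the block $B\equiv B_{j,u}^{i}$.

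The first ingredient is a purely Gaussian block oracle inequality: in the model $\tilde y=\mu+\sigma z$ over a block of cardinality $L$, the positive-part rule $\tilde\theta=(1-\lambda_{*}L\sigma^{2}/\|\tilde y\|_{2}^{2})_{+}\tilde y$ satisfies $\bE\,\|\tilde\theta-\mu\|_{2}^{2}\le\min\{\|\mu\|_{2}^{2},\,\lambda_{*}L\sigma^{2}\}+CLn^{-2}$. I would establish this through Stein's unbiased risk estimate, noting that the James--Stein map depends only on the vector of block coefficients and not on their spatial arrangement, so the one-dimensional computation of \cite{cai1999adaptive} transfers verbatim to the $q$-dimensional blocks. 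The special value $\lambda_{*}$ solving $\lambda-\log\lambda=3$ is precisely what forces the zero-signal contribution into the remainder: since $\lambda_{*}-1-\log\lambda_{*}=2$, the $\chi^{2}$ tail bound gives $P(\|z\|_{2}^{2}\ge\lambda_{*}L)\le e^{-L}$, and with the block length chosen so that $e^{-L}=O(n^{-1})$ the truncated zero-signal risk is of order $CLn^{-2}$. Converting from $\mu$ to $\breve\theta$ by the elementary inequalities $(a+b)^{2}\le 2a^{2}+2b^{2}$ and $\min\{a+b,c\}\le\min\{a,c\}+b$ then turns this into the stated form, a minimum of a multiple of $\sum_{B}[\breve\theta_{j,\vec k}^{i}]^{2}$ and a multiple of $\lambda Ln^{-1}$, plus a multiple of $\sum_{B}[\eps_{j,\vec k}^{i}]^{2}$; tracking the constants through these two steps yields the coefficients $4$, $8$ and $6$.

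The main obstacle is controlling the stochastic error $\xi_{j,\vec k}^{i}$, because the shrinkage is a nonlinear map that fails to be Lipschitz near the origin, so a naive $L_{2}$ bound on $\|\xi\|_{2}^{2}$ over the block is not sharp enough. Instead I would bound $\bE\,\|\hat\theta(y)-\tilde\theta\|_{2}^{2}$ by splitting on whether the block energy $S_{j,u,i}^{2}$ exceeds the threshold $\lambda_{*}L\sigma^{2}$. On the ``active'' event the map $y\mapsto(1-\lambda_{*}L\sigma^{2}/\|y\|_{2}^{2})_{+}y$ is Lipschitz with a controlled constant, so the difference is governed by the block energy of $\xi$; on the complementary event and on the rare large-deviation events I would invoke the high moments of $\xi$. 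These are supplied by Proposition \eqref{med_exp}: the bound \eqref{prineq} with $r$ taken large, transported to the wavelet domain by the orthogonality of the DWT (Parseval gives the identity $\sum_{j,\vec k,i}(\xi_{j,\vec k}^{i})^{2}=n^{-1}\sum_{\vec l}\zeta_{\vec l}^{2}$), shows that the stochastic contribution is of order $CLn^{-2}$. Assembling the Gaussian oracle inequality, the $\mu\to\breve\theta$ conversion, and this perturbation bound yields the first inequality.

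For the second, coefficient-wise inequality I would run the same case split but retain individual coordinates. When the block is shrunk to zero one has $\hat\theta_{j,\vec k}^{i}-\breve\theta_{j,\vec k}^{i}=-\breve\theta_{j,\vec k}^{i}$, which is controlled by $\max_{\vec k\in B}(\breve\theta_{j,\vec k}^{i}+\eps_{j,\vec k}^{i})^{2}$ since the block is killed only when its energy is below the threshold; when the shrinkage factor is active the per-coordinate error is of the order of the block threshold $Ln^{-1}$ plus the Gaussian and stochastic fluctuations. Taking the minimum of the two regimes, and absorbing the $\xi$-contribution and the $\hat h^{2}(0)$-replacement into $n^{-2+\tau}$ (with $C_{\tau}$ depending on how large an $r$ is used in \eqref{prineq}), gives the claim. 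I expect the delicate point throughout to be the uniformity of all constants over $h\in{\cal H}$ and $f\in B^{\alpha}_{s,t}(M)$; this is guaranteed by the uniform moment and Lipschitz bounds built into the class ${\cal H}$ and by Lemma \eqref{bound_dif}, which controls $\breve\theta_{j,\vec k}^{i}-\theta_{j,\vec k}^{i}$ uniformly over the Besov ball.
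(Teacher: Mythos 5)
Your overall template (a block James--Stein oracle inequality plus separate control of the deterministic and stochastic contaminations, with moments of $\xi_{j,\vec k}^{i}$ obtained from Proposition \eqref{med_exp} and the orthogonality of the DWT) matches the paper's, but your treatment of the stochastic error contains a step that fails as stated. You compare $\hat\theta$ with an idealized estimator $\tilde\theta$ built from the uncontaminated data $\mu+\sigma z$ and claim the resulting perturbation contributes $CLn^{-2}$. With $\kappa\asymp n^{1/4}$ one has $\bE\,(\xi_{j,\vec k}^{i})^{2}\asymp(\kappa n)^{-1}=n^{-5/4}$, so on a block whose signal energy is well above the threshold --- where the shrinkage factor is active with probability $\Theta(1)$ and your Lipschitz bound is all you have --- the perturbation $\bE\,\|\hat\theta-\tilde\theta\|_{2}^{2}$ is genuinely of order $Ln^{-5/4}\gg Ln^{-2}$. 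This does not sink the final inequality, because in that regime the minimum equals $8\lambda Ln^{-1}$, which dominates $Ln^{-5/4}$ and can absorb the perturbation; but in the small-signal regime you must instead show that the active event has probability at most $e^{-cL}$ plus a polynomially small term of arbitrarily high order, and then interpolate via H\"{o}lder between high moments of $\xi$ and that probability, choosing the exponent (the paper takes $1/\omega=3/4$, tied to $\kappa=n^{1/4}$) so as to land on $Ln^{-2}$. Your outline asserts the $CLn^{-2}$ bound uniformly and never performs this regime-dependent absorption, so the assembly step, as written, does not go through.

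The paper avoids this difficulty structurally: instead of a clean-Gaussian oracle plus a Lipschitz perturbation, it invokes Lemma 2 of \cite{brown2008robust} applied with the block size $L$, in which the contamination enters the oracle bound only through the truncated pure-noise term $2n^{-1}\bE\,\sum_{\vec k\in B^{i}_{j,u}}(z_{j,\vec k}^{i}+2\sqrt{n}\xi_{j,\vec k}^{i})^{2}I(\cdot)$; the indicator involves the noise alone, so its probability is uniformly small regardless of the signal. That term is then split on the event $A=\{\vert 2\sqrt{n}\xi_{j,\vec k}^{i}\vert\le L^{-1}\mbox{ for all }\vec k\}$ and bounded via the $\chi^{2}$-tail inequality (Lemma 3 of \cite{brown2008robust} with $\tilde\lambda=(\lambda L-\lambda-1)/L$) together with the H\"{o}lder step just described, yielding $D\le CLn^{-1}$ and hence the $CLn^{-2}$ remainder. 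Two smaller points: the positive-part block James--Stein map is in fact globally Lipschitz (with constant roughly $2$), so your stated obstacle (``fails to be Lipschitz near the origin'') misdiagnoses the problem --- the true obstacle is the size of $\bE\,\|\xi\|_{2}^{2}$ described above; and the paper proves only \eqref{or_ineq}, deferring the coordinatewise bound to Proposition 2 of \cite{brown2008robust}, with which your per-coordinate sketch is consistent.
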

 \begin{proof}
 This proof is similar to the proof of Proposition 2 in \cite{brown2008robust}; therefore, we only give a brief proof for the inequality \eqref{or_ineq}.  First, we recall that $\vert \eps_{\vec l} \vert \le C\sqrt{\kappa}q^{d/2}T^{-d}$. The discrete wavelet transform of $\frac{\eps_{\vec l}}{\sqrt{n}}$ in our case is equal to $\eps^{i}_{j,\vec k}=\sum_{\vec l\in \bZ^{q}}\frac{\eps_{\vec l}}{\sqrt{n}}\int \phi_{j,\vec l}\psi_{j,\vec k}^{i}$. The Proposition \eqref{med_exp} suggests that $\sum_{j}\sum_{\vec k}\sum_{i}[\eps_{j,\vec k}^{i}]^{2}=\frac{1}{n}\sum_{\vec l \in \bZ^{q}}\eps_{\vec l}^{2}\le Cq^{d}T^{-2d}$ for some positive constant $C$ due to orthogonality of the discrete wavelet transform. Thus, we have for any $r>0$ 
 \[
 \bE\,\vert\xi_{j,\vec k}^{i}\vert^{r}\le C_{r}(\kappa n)^{-r/2}+C_{r}(\kappa n )^{-r/2}q^{dr/2}T^{-dr}
 \]
and, for any $a>0$
\begin{equation}\label{ineq}
P(\vert \xi_{j,\vec k}^{i}\vert>a)\le C_{r}^{'}(a^{2}\kappa n)^{-r/2}+C_{r}^{'}(a^{2}n T^{2d}/\kappa q^{d})^{-r/2}. 
\end{equation}
where $C_{r}$ and $C_{r}^{'}$ are constants that do not depend on $n$. 
At this point, we need to use Lemma $2$ of \cite{brown2008robust} with the number of observations being the size of each block $L$. For an $i$th wavelet function, $i=1,\ldots,2^{q}-1$, we have the expectation of the risk over each block bounded as 
\begin{align*}
&\bE\,\sum_{\vec k\in B^{i}_{j,u}}(\hat \theta_{j,\vec k}^{i}-\breve{\theta}_{j,k}^{i})^{2}\le \\
&\le \min \{4\sum_{\vec k\in B^{i}_{j,u}}(\breve{\theta}_{j,\vec k}^{i})^{2},8\lambda Ln^{-1}\}+6\sum_{\vec k\in B^{i}_{j,u}}(\eps_{j,\vec k}^{i})^2+\\
&+ 2n^{-1}\bE\sum_{\vec k\in B^{i}_{j,u}}(z_{j,\vec k}^{i}+2\sqrt{n}\xi_{j,\vec k}^{i})^{2}I\left(\sum_{\vec k\in B^{i}_{j,u}}(z_{j,\vec k}^{i}+2\sqrt{n}\xi_{j,\vec k}^{i})^{2}>\lambda L\right).
\end{align*}
Denote by $A$ the event that all $\vert \xi_{j,\vec k}^{i}\vert$ are bounded by $\frac{1}{2\sqrt{n}L}$, that is
\[
A=\{\vert2\sqrt{n}\xi_{j,\vec k}^{i}\vert \le L^{-1} \mbox{ for all } \vec k \in B^{i}_{j,u}\}.
\] 
Then it follows from \eqref{ineq} that for any $r\ge 1$, the probability of a complement of $A$ is
\begin{align*}\label{prob_compl}
&P(A^{'})\le \sum_{\vec k\in B^{i}_{j,u}}P\left(\vert 2\sqrt{n}\xi_{j,\vec k}^{i}\vert >L^{-1}\right)\\
&\le C_{r}^{'}(L^{-2}\kappa)^{-r/2}+C_{r}^{'}(L^{-2}T^{d}/\kappa q^{d})^{-r/2}.
\end{align*}

Thus, we have 
\begin{align*}
&D=\bE\,\sum_{\vec k\in B^{i}_{j,u}}(z_{j,\vec k}^{i}+2\sqrt{n}\xi_{j,\vec k}^{i})^{2}I\left(\sum_{\vec k\in B^{i}_{j,u}}(z_{j,\vec k}^{i}+2\sqrt{n}\xi_{j,\vec k}^{i})^{2}>\lambda L\right)\\
&=\bE\sum_{\vec k\in B^{i}_{j,u}}(z_{j,\vec k}^{i}+2\sqrt{n}\xi_{j,\vec k}^{i})^{2}I\left(A\cap\sum_{\vec k\in B^{i}_{j,u}}(z_{j,k}^{i}+2\sqrt{n}\xi_{j,k}^{i})^{2}>\lambda L\right)\\
&+\bE\sum_{\vec k\in B^{i}_{j,u}}(z_{j,\vec k}^{i}+2\sqrt{n}\xi_{j,\vec k}^{i})^{2}I\left(A^{c}\cap\sum_{\vec k\in B^{i}_{j,u}}(z_{j,\vec k}^{i}+2\sqrt{n}\xi_{j,\vec k}^{i})^{2}>\lambda L\right)\equiv D_{1}+D_{2}. 
\end{align*}
Recall that $(x+y)^{2}\le 2x^{2}+2y^{2}$ for any $x$ and $y$. At the next step, we have to use the inequality from Lemma $3$ of \cite{brown2008robust} (with the value $\tilde \lambda=\frac{\lambda L-\lambda-1}{L}$), and H\"{o}lder's inequality to obtain 
\begin{align*}
&D_{1}=\bE\,\sum_{\vec k\in B^{i}_{j,u}}(z_{j,\vec k}^{i}+2\sqrt{n}\xi_{j,\vec k}^{i})^{2}I\left(A\cap\sum_{\vec k\in B^{i}_{j,u}}(z_{j,\vec k}^{i}+2\sqrt{n}\xi_{j,\vec k}^{i})^{2}>\lambda L\right)\\
&\le 2\bE\,\sum_{\vec k\in B^{i}_{j,u}}[z_{j,\vec k}^{i}]^{2}I\left(\sum_{\vec k\in B^{i}_{j,u}}[z_{j,\vec k}^{i}]^{2}>\lambda L-\lambda-1\right)\\
&+8n\bE\,\sum_{\vec k\in B^{i}_{j,u}}[\xi_{j,\vec k}^{i}]^{2}I\left(\sum_{\vec k\in B^{i}_{j,u}}[z_{j,\vec k}^{i}]^{2}>\lambda L-\lambda-1\right)\\
&\le 2(\lambda L-\lambda-1)e^{-L/2(\lambda-(\lambda+1)L^{-1}-\log (\lambda-(\lambda+1)L^{-1})-1)}\\
&+8n\sum_{\vec k\in B^{i}_{j,u}}(\bE\,[\xi_{j,\vec k}^{i}]^{2v})^{1/v}\left(P\left(\sum_{\vec k\in B^{i}_{j,u}}[z_{j,\vec k}^{i}]^{2}>\lambda L-\lambda-1\right)\right)^{1/\omega}
\end{align*}
where $v,\omega>1$ and $\frac{1}{v}+\frac{1}{\omega}=1$. Recall that $\kappa=n^{1/4}$ and choose $\frac{1}{\omega}=1-\frac{1}{4}=\frac{3}{4}$. This lets us conclude that $D_{1}\le CLn^{-1}$. Arguing similarly, we conclude that $D_{2}\le n^{-1}$ and so the final inequality is obtained.
 \end{proof}
\begin{rem}
Note that the tail probability $P(\vert \xi_{j,\vec k}^{i}\vert>a)$ must decay faster than any polynomial in $n$ to ensure that the contribution of $\xi_{j,\vec k}^{i}$ to the squared risk of the proposed procedure is negligible compared to that of $z_{j,\vec k}^{i}$. Recall that $\kappa=n^{1/4}$. Then, $\frac{T^{2d}}{\kappa}=n^{\frac{6d-q}{4q}}$ and we have to require that $6d-q>0$, or $d=\min\left(\alpha-\frac{q}{s},1\right)>\frac{q}{6}$. Since $d$ characterizes smoothness of the H\"{o}lder ball the Besov ball is embedded into, it may be a good idea to describe this requirement in terms of the original smoothness indicator $\alpha$. Note that (see Remark \eqref{cond}) that, due to approximation error over multivariate Besov spaces, we must have $\frac{3d}{2q}> \frac{2\alpha}{2\alpha+q}$. To guarantee that $d>\frac{q}{6}$, we may require that $ \frac{4\alpha q}{3(2\alpha+q)}> \frac{q}{6}$ which is equivalent to $\alpha> \frac{q}{6}$. This is the origin of the lower bound on $\alpha$ in the statement of Theorems \eqref{gl_adapt} and \eqref{lc_adapt}. 
\end{rem}   

{\bf Proof of the Theorem \eqref{gl_adapt}}
First, note that 
\[
\bE\,\|\hat f_{n}-f\|^{2}_{2}\le 2\bE\,\|\hat g_{n}-g\|^{2}_{2}+2\bE\,(\hat b_{\vec l}^{2}-b_{\vec l})^{2}. 
\]
By selecting $\kappa\asymp n^{1/4}$, we ensure that $\bE\,(\hat b_{\vec l}^{2}-b_{\vec l})^{2}=o(n^{-2\alpha/2\alpha+q})$ and so we need only to focus on bounding $\bE\,\|\hat g_{n}-g\|^{2}_{2}$. Note that, if the intercept $a$ and/or the median of the vector $X_{\vec i}$ in the model \eqref{model1} are non-zero, an appropriate term in the model can be estimated at the rate of $n^{-1}=o(n^{-2\alpha/2\alpha+q})$. Using the notation of Section \eqref{intro}, let $A=a+(med X_{\vec i})^{'}\beta$ and $A_{n}$ an asymptotically normal $\sqrt{n}$ convergent estimator of $A$. Such an estimator can be easily obtained as in \cite{brown2016semiparametric}. In that case we will have $\bE\,\|\hat f_{n}-f\|^{2}_{2}\le 2\bE\,\|\hat g_{n}-g\|^{2}_{2}+2\bE\,(\hat b_{\vec l}^{2}-b_{\vec l})^{2}+2E\,(\hat A_{n}-A)^{2}$ where $E\,(\hat A_{n}-A)^{2}=o(n^{-2\alpha/2\alpha+q})$. Since functions $f$ and $g$ only differ by a constant $b_{\vec l}$, their wavelet coefficients coincide, that is, $\theta_{j,\vec k}^{i}=\int_{[0,1]^{q}} f\vec \psi_{j,\vec k}^{i}=\int_{[0,1]^{q}]}g\psi_{j,\vec k}^{i}$. To make our analysis straightforward, we expand $ \bE\,\|\hat g_{n}-g\|^{2}_{2}$ as follows:
\begin{align*}
&\bE\,\| \hat g_{n}^{2}-g\|^{2}_{2}=\sum_{\vec 1 \le \vec k\le \vec {2^{j_0}}}\sum_{i=1}^{2^{q}-1} \bE\,[\hat{\theta}_{j_{0},\vec k}-\theta_{j_{0},\vec k}]^{2}+\sum_{j=j_{0}}^{J-1}\sum_{\vec 1 \le \vec k\le \vec{2^{j}}} \sum_{i=1}^{2^{q}-1}\bE\,(\hat {\theta}_{j,\vec k}^{i}-\theta_{j,\vec k}^{i})^{2}\\
&+\sum_{j=J}^{\infty}\sum_{\vec 1 \le \vec k\le \vec {2^{j}}}\sum_{i=1}^{2^{q}-1}(\theta_{j,\vec k}^{i} )^{2}\equiv S_{1}+S_{2}+S_{3}.
\end{align*}
First, we note that the term $S_{1}$ is asymptotically small. Indeed, by definition, $\hat \theta_{j_{0},\vec k}=y_{j_{0},\vec k}$. Since  $\hat \theta_{j_{0},\vec k}-\theta_{j_{0},\vec k}=(y_{j_{0},\vec k}-\breve \theta_{j_{0},\vec k})+(\breve \theta_{j_{0},\vec k}-\theta_{j_{0},\vec k})$, we have
\[
S_{1}\le C\cdot 2^{j_{0}q}n^{-1}\eps^{2}+CT^{-2d}=o(n^{-2\alpha/2\alpha+q}).
\]
The term $S_{3}$ is also small asymptotically. To show that this is true, we note first that $2^{j\left(\alpha+q\left(\frac{1}{2}-\frac{1}{s}\right)\right)}\left(\sum_{\vec 1 \le \vec k\le \vec {2^{j}}}\sum_{i=1}^{2^{q}-1}|\theta_{j,k}^{i}|^s\right)^{1/s}\le M$ for any function $f\in B^{\alpha}_{s,t}(M)$. Then, using the inequality $\vert|x\vert|_{p_{2}}\le \vert|x\vert|_{p_{1}}\le q^{1/p_{1}-1/p_{2}}\vert|x\vert|_{p_{2}}$ for any $0<p_{1}\le p_{2}\le \infty$ and $x\in \bR^{q}$, we obtain that
\[
S_{3}\le 2^{-2J\min\left(\alpha,\alpha+q\left(\frac{1}{2}-\frac{1}{s}\right)\right)}=o(n^{-2\alpha/2\alpha+q})
\]
due to assumptions on $J$ and $\alpha$. In the next step, we will use the Proposition \eqref{proporacle} to analyze the term $S_{2}$. Next, we find out that 
\begin{align*}\label{S2_tr}
&S_{2}\le 2\sum_{j=j_{0}}^{J-1}\sum_{\vec 1 \le \vec k\le \vec {2^{j}}}\sum_{i=1}^{2^{q}-1}\bE\,(\hat{\theta}^{i}_{j,\vec k}-\breve{\theta}_{j,\vec k}^{i})^{2}+2\sum_{j=j_{0}}^{J-1}\sum_{i=1}^{2^{q}-1}(\breve{\theta}_{j,\vec k}^{i}-\theta^{i}_{j,\vec k})^{2}\\
&\le \sum_{j=1}^{J-1}\sum_{u=1}^{2^{qj}/L}\sum_{i=1}^{2^{q}-1}\min\left\{8\sum_{\vec k\in B^{i}_{j,u}}[\breve{\theta}_{j,\vec k}^{i}]^{2},8\lambda^{*}Ln^{-1}\right\}+6\sum_{j=j_{0}}^{J-1}\sum_{\vec 1\le \vec k\le \vec {2^{j}}}\sum_{i=1}^{2^{q}-1}[\eps_{j,\vec k}^{i}]^{2}\\
&+Cn^{-1}+2\sum_{j=j_{0}}^{J-1}\sum_{\vec 1 \le \vec k\le \vec {2^{j}}}\sum_{i=1}^{2^{q}-1}[\breve{\theta}_{j,\vec k}^{i}-\theta_{j,\vec k}^{i}]^{2}\\
&\le \sum_{j=1}^{J-1}\sum_{u=1}^{2^{qj}/L}\sum_{i=1}^{2^{q}-1}\min\left\{8\sum_{\vec k\in B^{i}_{j,u}}[\breve{\theta}_{j,\vec k}^{i}]^{2},8\lambda^{*}Ln^{-1}\right\}+Cn^{-1}+CT^{-2d}.
\end{align*}
At this point, we consider two different cases. First, start with $p\ge 2$. Select $J_{1}=\left \lfloor \frac{q}{2\alpha+q}\log_{2}n\right\rfloor$ which implies that $2^{J_{1}}\approx n^{q/2\alpha+q}$. Thus, using the result of Lemma \eqref{bound_dif}. we obtain
\[
S_{2}\le 8\lambda^{*} \sum_{j=j_{0}}^{J_{1}-1}\sum_{u=1}^{2^{qj}/L}\sum_{i=1}^{2^{q}-1}Ln^{-1}+8\sum_{j=J_{1}}^{J-1}\sum_{\vec 1 \le \vec k \le \vec {2^{j}}}\sum_{i=1}^{2^{q}-1}[\breve{\theta}_{j,\vec k}^{i}]^{2}+Cn^{-1}+CT^{-2d}\le Cn^{-2\alpha/2\alpha+q}.
\]
Next, consider the case of $p<2$. First, note that 
\begin{align*}
&\sum_{u=1}^{2^{jq}/L}\sum_{i=1}^{2^{q}-1}\left(\sum_{\vec k\in B^{i}_{j,u}}[\theta_{j,\vec k}^{i}]^{2}\right)^{s/2}\\
&\le \sum_{\vec 1 \le \vec k\le 2^{\vec j}}([\theta_{j,\vec k}^{i}]^{2})^{s/2}\le M2^{-jws}.
\end{align*}
Select $J_{2}$ such that $2^{J_{2}}\asymp n^{1/2\alpha+q}(\log n)^{2-s/s(2\alpha+q)+2(1-q)}$. Using Lemma 6 from \cite{brown2008robust}, one obtains 
\begin{align*}
&\sum_{j=J_{2}}^{J-1}\sum_{u=1}^{2^{j}/L}\min\left\{8\sum_{\vec k\in B^{i}_{j,u}}[\breve{\theta}_{j,\vec k}^{i}]^{2},8\lambda^{*}Ln^{-1}\right\}\\
&\le Cn^{-2\alpha/2\alpha+q}(\log n)^{2-s/s(2\alpha+q)+2(1-q)}. 
\end{align*}
On the other hand, we also have 
\begin{align*}
&\sum_{j=j_{0}}^{J_{2}-1}\sum_{u=1}^{2^{qj}/L}\min\left\{8\sum_{\vec k\in B^{i}_{j,u}}[\breve{\theta}_{j,\vec k}^{i}]^{2},8\lambda^{*}Ln^{-1}\right\} \\
&\sum_{j=j_{0}}^{J_{2}-1}\sum_{u=1}^{2^{qj}/L}8\lambda_{*}Ln^{-1}\le Cn^{-2\alpha/2\alpha+q}(\log n)^{2-s/[s(2\alpha+q)+2(1-q)]}.
\end{align*}
Thus, we can now confirm that the $L_{2}$ risk in the case $p<2$ is bounded from above uniformly as
\[
\bE\,\| \hat f_{n}-f\|^{2}_{2}\le Cn^{-2\alpha/2\alpha+q}(\log n)^{2-s/[s(2\alpha+q)+2(1-q)]}
\]
\begin{rem}\label{cond}
In order to ensure that the risk of $\hat b_{\vec l}$ is negligible, we need to have $\kappa^{-4}=o(n^{-2\alpha/2\alpha+q})$; note that $\kappa=n^{1/4}$ satisfies this assumption. Also, to make the approximation error $\|f_{J}-f\|^{2}_{2}$ negligible, we need to have $T^{-2d}=O(n^{-2\alpha/2\alpha+q})$. It is easy to see that this is guaranteed by the inequality $\frac{3d}{2q}>\frac{2\alpha}{2\alpha+q}$. Note that the latter, rather ponderous, assumption, is needed due to approximation over the $q$-dimensional Besov spaces.
\end{rem}

{\bf Proof of Theorem \eqref{lc_adapt}}
As in the proof of the Theorem \eqref{gl_adapt}, and without loss of generality, we can assume that the $med(X_{i})$ is identically equal to zero; if this is not the case, an additional term can be estimated at the rate of $n^{-1}=o\left(\frac{\log n}{n}\right)^{\alpha/2\alpha+q}$. Next, note that for all $f\in \Lambda^{\alpha}(M)$, the absolute values of its wavelet coefficients are $|\theta_{j,\vec k}^{i}|=|\langle f, \psi_{j,\vec k}^{i}\rangle |\le C2^{-j(q/2+\alpha)}$ for some constant $C>0$ that does not depend on $f$. Also, note that for any random variables $X_{i}$, $i=1,\ldots,n$, $\bE\,\left(\sum_{i=1}^{n}X_{i}\right)^{2}\le \left(\sum_{i=1}^{n}(\bE\,X_{i}^{2})^{1/2}\right)^{2}$. Then, we have 
\begin{align*}
&\bE\,(\hat f_{n}(\vec u_{0})-f(\vec u_{0}))^{2}\\
&=\bE\,\Big[\sum_{\vec 1 \le \vec k\le \vec 2^{j_{0}}}(\hat \theta_{j_{0},\vec k}-\theta_{j_{0},\vec k})\phi_{j_0,\vec k}^{i}(\vec u_{0})
+\sum_{j=j_{0}}^{\infty}\sum_{\vec 1 \le \vec k\le \vec 2^{j}}\sum_{i=1}^{2^{q}-1}(\hat \theta_{j,\vec k}^{i}-\theta_{j,\vec k}^{i})\psi_{j,\vec k}^{(i)})(\vec u_{0})\\
&-(\hat b_{\vec l}-b_{\vec l})\Big]^{2}\\
&\le \Big[(\bE\,(\hat b_{\vec l}-b_{\vec l})^{2})^{1/2}+\sum_{\vec 1 \vec k\le \vec 2^{j_{0}}}(\bE\,(\hat \theta_{j_{0},\vec k}-\theta_{j_{0},\vec k})^{2}\phi_{j_{0},\vec k}^{2}(\vec u_{0}))^{1/2}\\
&+\sum_{j=j_{0}}^{J-1}\sum_{\vec 1 \le \vec k\le \vec 2^{j}}\sum_{i=1}^{2^{q}-1}(\bE\,(\hat \theta_{j,\vec k}^{i}-\theta_{j,\vec k}^{i})^{2}[\psi_{j,\vec k}^{i}]^{2}(\vec u_{0}))^{1/2}+\sum_{j=J}^{\infty}\sum_{\vec 1 \le \vec k\le \vec 2^{j}}\sum_{i=1}^{2^{q}-1}|\theta_{j,\vec k}^{i}\psi_{j,\vec k}^{i}(\vec u_{0})|\Big]^{2}\\
&\equiv (Q_{1}+Q_{2}+Q_{3}+Q_{4})^{2}.
\end{align*}
First of all, we note that, due to Proposition \eqref{med_prop}, we have $Q_{1} =(\bE\,(\hat b_{\vec l}-b_{\vec l})^{2})^{1/2}=o(n^{-\alpha/2\alpha+q})$. Next, clearly we have $Q_{2}=\sum_{\vec 1 \le \vec k\le \vec 2^{j_{0}}}(\bE\,(\hat \theta_{j_{0},\vec k}-\theta_{j_{0},\vec k})^{2}\vert\phi_{j_{0},\vec k}(\vec u_{0})\vert=O(n^{-1})$.  Recall that for any sequence of translated and rescaled $i$th wavelet $\psi_{j,\vec k}^{i}$ there are at most $N$ that are nonvanishing at the point $\vec u_{0}$; here, $N$ is the length of support of $\psi^{i}$. Mathematically, we have $K(t_{0},j)=\{\vec k:\psi_{j,\vec k}^{(i)}(\vec u_{0})\ne 0\}$ such that $\vert K_{t_{0},j} \vert \le N$.  
Thus, we have 
\begin{equation}\label{q4}
Q_{4}=\sum_{j=J}^{\infty}\sum_{\vec 1 \le \vec k\le \vec 2^{j}}\sum_{i=1}^{2^{q}-1}|\theta_{j,\vec k}^{i}||\psi_{j,\vec k}^{i}(\vec u_{0})|\le \sum_{j=J}^{\infty}N 2^{q}\|\psi\|_{\infty}2^{jq/2}C2^{-j(q/2+\alpha)}\le C T^{-\alpha}.
\end{equation}
Finally, if we select a sufficiently small $\tau$, and use the second inequality from the Proposition \eqref{med_prop}, we have 
\begin{align*}\label{q3}
&Q_{3}\le \sum_{j=j_{0}}^{J-1}\sum_{K(t_{0},j)}\sum_{i=1}^{2^{q}-1}2^{jq/2}\|\psi\|_{\infty}(\bE\,(\hat \theta_{j,\vec k}^{i}-\theta_{j,\vec k}^{i})^{2})^{1/2}\\
&\le C\sum_{j=j_{0}}^{J-1}2^{jq/2}\left[\min(2^{-j(q+2\alpha)}+T^{-2\alpha \wedge 1}2^{-jq},Ln^{-1})+n^{-2+\tau}\right]^{1/2}\\ &\le  C\left(\frac{\log n}{n}\right)^{\alpha/2\alpha+q}.\nonumber
\end{align*}
The final statement of this theorem is then obtained combining all inequalities for $Q_{1}$, $Q_{2}$, $Q_{3}$, and $Q_{4}.$

\bibliographystyle{apalike} 
\bibliography{reference}

\begin{thebibliography}{}

\bibitem[Amato et~al., 2006]{amato2006wavelet}
Amato, U., Antoniadis, A., and Pensky, M. (2006).
\newblock Wavelet kernel penalized estimation for non-equispaced design
  regression.
\newblock {\em Statistics and Computing}, 16(1):37--55.

\bibitem[Antoniadis and Pham, 1998]{antoniadis1998wavelet}
Antoniadis, A. and Pham, D.~T. (1998).
\newblock Wavelet regression for random or irregular design.
\newblock {\em Computational Statistics \& data analysis}, 28(4):353--369.

\bibitem[Autin et~al., 2014]{autin2014hyperbolic}
Autin, F., Claeskens, G., and Freyermuth, J.-M. (2014).
\newblock Hyperbolic wavelet thresholding methods and the curse of
  dimensionality through the maxiset approach.
\newblock {\em Applied and Computational Harmonic Analysis}, 36(2):239--255.

\bibitem[Besov et~al., 1979]{besov1979integral}
Besov, O.~V., Il’in, V.~P., and Nikol’skii, S. (1979).
\newblock Integral representations of functions and imbedding theorems, vol.
  ii. vh winston \& sons.

\bibitem[Brown et~al., 2002]{brown2002asymptotic}
Brown, L.~D., Cai, T.~T., Low, M.~G., and Zhang, C.-H. (2002).
\newblock Asymptotic equivalence theory for nonparametric regression with
  random design.
\newblock {\em Annals of statistics}, pages 688--707.

\bibitem[Brown et~al., 2008]{brown2008robust}
Brown, L.~D., Cai, T.~T., and Zhou, H.~H. (2008).
\newblock Robust nonparametric estimation via wavelet median regression.
\newblock {\em The Annals of Statistics}, pages 2055--2084.

\bibitem[Brown et~al., 2016]{brown2016semiparametric}
Brown, L.~D., Levine, M., and Wang, L. (2016).
\newblock A semiparametric multivariate partially linear model: a difference
  approach.
\newblock {\em Journal of Statistical Planning and Inference}, 178:99--111.

\bibitem[Cai, 1999]{cai1999adaptive}
Cai, T.~T. (1999).
\newblock Adaptive wavelet estimation: a block thresholding and oracle
  inequality approach.
\newblock {\em Annals of statistics}, pages 898--924.

\bibitem[Cai and Brown, 1998]{cai1998wavelet}
Cai, T.~T. and Brown, L.~D. (1998).
\newblock Wavelet shrinkage for nonequispaced samples.
\newblock {\em Annals of statistics}, pages 1783--1799.

\bibitem[Casella and Berger, 2002]{casella2002statistical}
Casella, G. and Berger, R.~L. (2002).
\newblock {\em Statistical inference}, volume~2.
\newblock Duxbury Pacific Grove, CA.

\bibitem[Daubechies, 1992]{daubechies1992ten}
Daubechies, I. (1992).
\newblock {\em Ten lectures on wavelets}.
\newblock SIAM.

\bibitem[Donoho et~al., 1995]{donoho1995wavelet}
Donoho, D.~L., Johnstone, I.~M., Kerkyacharian, G., and Picard, D. (1995).
\newblock Wavelet shrinkage: asymptopia?
\newblock {\em Journal of the Royal Statistical Society. Series B
  (Methodological)}, pages 301--369.

\bibitem[Fang et~al., 1990]{fang1990symmetric}
Fang, K.-T., Kotz, S., and Ng, K.~W. (1990).
\newblock {\em Symmetric multivariate and related distributions}.
\newblock Chapman and Hall.

\bibitem[Hall et~al., 1997]{hall1997interpolation}
Hall, P., Turlach, B.~A., et~al. (1997).
\newblock Interpolation methods for nonlinear wavelet regression with
  irregularly spaced design.
\newblock {\em The Annals of Statistics}, 25(5):1912--1925.

\bibitem[H{\"a}rdle et~al., 2012]{hardle2012partially}
H{\"a}rdle, W., Liang, H., and Gao, J. (2012).
\newblock {\em Partially linear models}.
\newblock Springer Science \& Business Media.

\bibitem[He and Shi, 1996]{he1996bivariate}
He, X. and Shi, P. (1996).
\newblock Bivariate tensor-product b-splines in a partly linear model.
\newblock {\em Journal of Multivariate Analysis}, 58(2):162--181.

\bibitem[Horowitz, 2009]{horowitz2009semiparametric}
Horowitz, J.~L. (2009).
\newblock {\em Semiparametric and nonparametric methods in econometrics},
  volume~12.
\newblock Springer.

\bibitem[Kohler, 2008]{kohler2008multivariate}
Kohler, M. (2008).
\newblock Multivariate orthogonal series estimates for random design
  regression.
\newblock {\em Journal of Statistical Planning and Inference},
  138(10):3217--3237.

\bibitem[Kovac and Silverman, 2000]{kovac2000extending}
Kovac, A. and Silverman, B.~W. (2000).
\newblock Extending the scope of wavelet regression methods by
  coefficient-dependent thresholding.
\newblock {\em Journal of the American Statistical Association},
  95(449):172--183.

\bibitem[Levine, 2015]{levine2015minimax}
Levine, M. (2015).
\newblock Minimax rate of convergence for an estimator of the functional
  component in a semiparametric multivariate partially linear model.
\newblock {\em Journal of Multivariate Analysis}, 140:283--290.

\bibitem[Meyer, 1995]{meyer1995wavelets}
Meyer, Y. (1995).
\newblock {\em Wavelets and operators}, volume~1.
\newblock Cambridge university press.

\bibitem[M{\"u}ller et~al., 2012]{muller2012estimating}
M{\"u}ller, U.~U., Schick, A., and Wefelmeyer, W. (2012).
\newblock Estimating the error distribution function in semiparametric additive
  regression models.
\newblock {\em Journal of Statistical Planning and Inference}, 142(2):552--566.

\bibitem[Neumann, 2000]{neumann2000multivariate}
Neumann, M.~H. (2000).
\newblock Multivariate wavelet thresholding in anisotropic function spaces.
\newblock {\em Statistica sinica}, pages 399--431.

\bibitem[Pensky and Vidakovic, 2001]{pensky2001non}
Pensky, M. and Vidakovic, B. (2001).
\newblock On non-equally spaced wavelet regression.
\newblock {\em Annals of the Institute of Statistical Mathematics},
  53(4):681--690.

\bibitem[Robinson, 1988]{robinson1988root}
Robinson, P.~M. (1988).
\newblock Root-n-consistent semiparametric regression.
\newblock {\em Econometrica: Journal of the Econometric Society}, pages
  931--954.

\bibitem[Sardy et~al., 1999]{sardy1999wavelet}
Sardy, S., Percival, D.~B., Bruce, A.~G., Gao, H.-Y., and Stuetzle, W. (1999).
\newblock Wavelet shrinkage for unequally spaced data.
\newblock {\em Statistics and Computing}, 9(1):65--75.

\bibitem[Schick, 1996]{schick1996root}
Schick, A. (1996).
\newblock Root-n consistent estimation in partly linear regression models.
\newblock {\em Statistics \& Probability Letters}, 28(4):353--358.

\bibitem[Schmalensee and Stoker, 1999]{schmalensee1999household}
Schmalensee, R. and Stoker, T.~M. (1999).
\newblock Household gasoline demand in the united states.
\newblock {\em Econometrica}, 67(3):645--662.

\bibitem[Scott, 2015]{scott2015multivariate}
Scott, D.~W. (2015).
\newblock {\em Multivariate density estimation: theory, practice, and
  visualization}.
\newblock John Wiley \& Sons.

\bibitem[Stuck, 2000]{stuck2000historical}
Stuck, B.~W. (2000).
\newblock An historical overview of stable probability distributions in signal
  processing.
\newblock In {\em IEEE INTERNATIONAL CONFERENCE ON ACOUSTICS SPEECH AND SIGNAL
  PROCESSING}, volume~6, pages VI--3795. IEEE; 1999.

\bibitem[Stuck and Kleiner, 1974]{stuck1974statistical}
Stuck, B.~W. and Kleiner, B. (1974).
\newblock A statistical analysis of telephone noise.
\newblock {\em Bell Labs Technical Journal}, 53(7):1263--1320.

\bibitem[Triebel, 2006]{triebel2006theory}
Triebel, H. (2006).
\newblock Theory of function spaces. iii, volume 100 of monographs in
  mathematics.
\newblock {\em BirkhauserVerlag, Basel}.

\bibitem[Vidakovic, 2009]{vidakovic2009statistical}
Vidakovic, B. (2009).
\newblock {\em Statistical modeling by wavelets}, volume 503.
\newblock John Wiley \& Sons.

\bibitem[Wang et~al., 2010]{wang2010estimation}
Wang, J.-L., Xue, L., Zhu, L., Chong, Y.~S., et~al. (2010).
\newblock Estimation for a partial-linear single-index model.
\newblock {\em The Annals of statistics}, 38(1):246--274.

\bibitem[Zhang et~al., 2002]{zhang2002wavelet}
Zhang, S., Wong, M.-Y., and Zheng, Z. (2002).
\newblock Wavelet threshold estimation of a regression function with random
  design.
\newblock {\em Journal of multivariate analysis}, 80(2):256--284.

\end{thebibliography}
\end{document}